\def\crn#1#2{{\vcenter{\vbox{
        \hbox{\kern#2pt \vrule width.#2pt height#1pt
           }
          \hrule height.#2pt}}}}
\def\intprod{\mathchoice\crn54\crn54\crn{3.75}3\crn{2.5}2}
\def\into{\mathbin{\intprod}}
\newcommand{\stopthm}{\hfill$\square$\medskip}
\newcommand{\pa}{\partial}
\newcommand{\Eval}{\operatorname{Eval}}
\renewcommand{\span}{\operatorname{span}}
\renewcommand{\Re}{\operatorname{Re}}
\newcommand{\bx}{\mathbf x}
\newcommand{\bp}{\mathbf p}
\newcommand{\cU}{\mathcal{U}}
\newcommand{\cJ}{\mathcal{J}}
\newcommand{\cI}{\mathcal{I}}
\newcommand{\cS}{\mathcal{S}}
\newcommand{\cF}{\mathcal{F}}
\newcommand{\R}{\mathbb R}
\newcommand{\ep}{\epsilon}
\newcommand{\om}{\omega}
\newcommand{\ga}{\gamma}
\newcommand{\al}{\alpha}
\newcommand{\xh}{\widehat{x}}
\newcommand{\cL}{\mathcal{L}}
\newcommand{\be}{\beta} 
\newcommand{\gb}{\bar{g}}
\theoremstyle{plain}
\newtheorem{theorem}{Theorem}[section]
\newtheorem{lemma}[theorem]{Lemma}
\newtheorem{proposition}[theorem]{Proposition}
\theoremstyle{definition}
\newtheorem{definition}[theorem]{Definition}
\theoremstyle{remark}
\newtheorem{remark}[theorem]{Remark}
\numberwithin{equation}{section}
\title[Normal Form for Edge Metrics]{Normal Form for Edge Metrics}     
\author{C. Robin Graham}
\address{Department of Mathematics, University of Washington,
Box 354350\\
Seattle, WA 98195-4350}
\email{robin@math.washington.edu}
\author{Joshua M. Kantor}
\address{MIT Lincoln Laboratory, Lexington, MA 02420-9108}   
\email{joshua.kantor@ll.mit.edu}
\begin{document}

\maketitle

\thispagestyle{empty}

\renewcommand{\thefootnote}{}
\footnotetext{Partially supported by NSF grant \# DMS 0906035.}   
\renewcommand{\thefootnote}{1}

\section{Introduction}\label{intro}

An edge metric is a metric on the interior of a
manifold-with-boundary which is singular at the boundary in a manner
described by a given fibration of the boundary.  The related 
edge differential operators arise in many settings and have been the
subject of much research; see, for example, \cite{Ma}.  Our interest in edge
metrics arises from the 
observation that they are a robust class of metrics naturally generalizing
the  
product of a conformally compact metric with a metric on a compact
manifold.  The AdS/CFT correspondence in physics deals with such
product metrics, and the arena of edge metrics appears 
to be a natural setting for the geometric and analytic 
questions which arise.  The thesis \cite{Ka} considers a problem
concerning eleven-dimensional supergravity from this point of view.  
In this paper we derive a normal form for edge metrics which we expect will
be useful in further studies.  The normal form is the analogue of geodesic
normal coordinates relative to the boundary at infinity.  

Edge metrics reduce to conformally compact metrics in the special case that
the fibers of the boundary are points.  Asymptotically hyperbolic (AH)
metrics are conformally compact metrics satisfying a particular scalar 
normalization at infinity.  The normal form for AH metrics was derived in  
\cite{GL} and a different proof was given in \cite{JS2}.  This normal form  
has been useful in a number of problems concerning AH metrics.  The
existence statement is that if $g$ is
AH on $X$, then there is a diffeomorphism $\psi$ from a neighborhood 
of $\{0\}\times \pa X$ in $[0,\infty)\times \pa X$ to a neighborhood  
of $\pa X$ in $X$ such that $\psi|_{\pa X}=Id$ and 
\begin{equation}\label{nform}
\psi^*g=\frac{dx^2+h_x}{x^2},
\end{equation}
where $x$ is the coordinate in $[0,\infty)$ and $h_x$ is a 1-parameter
family of metrics on $\pa X$.  The normal form for $g$ is not unique and  
this is a crucial point.  
There is a conformal class of metrics on $\pa X$, called the conformal
infinity of $g$, and the normal forms for $g$ are parametrized precisely by
the representative metrics in the conformal infinity:  $h_0$ is the given
conformal representative.  Conformal rescalings  
on the boundary thus correspond via the normal form to diffeomorphism
changes on the interior.  One application of the AH normal form is to the
renormalized volume of an AH Einstein metric (see \cite{G}).  The
renormalized volume is defined in terms of 
the exhaustion determined by a defining function $x$ 
in a normal form, and its invariance or noninvariance 
(measured by a conformal anomaly) is determined via properties of the  
diffeomorphism determined by a conformal change on the boundary.  In
\cite{JS2} the normal form  
\eqref{nform} arose in an inverse scattering context, where it was used  
to normalize the action of diffeomorphisms on AH metrics.   

For edge metrics there is an analogue of the AH scalar normalization, 
which defines 
what we call a normalized edge metric.  There is another necessary
condition for an edge metric to have a normal form which is vacuous in the
conformally compact case.  It is that a 1-form on the boundary fibers
constructed out of the metric and a defining function be
globally smoothly exact; this condition is independent of the choice of
defining function.  We call an edge metric satisfying this condition
an exact edge metric.  An exact edge metric has a reduced conformal
infinity analogous to the conformal infinity in the conformally compact
case.  In the edge case this is a conformal class of metrics on 
$TX|_{\pa X}/V$, where  
$V$ is the vertical bundle of the boundary fibration (i.e. the tangent
bundle to the fibers), and any two metrics in the conformal class differ by
a positive function which is locally constant on the boundary fibers.  Our
main result, Theorem~\ref{main}, asserts that an exact, normalized edge 
metric $g$ can be  
put into normal form near the boundary by a diffeomorphism which restricts
to the identity on the boundary, and the different normal forms for $g$ are   
parametrized precisely by the representatives for the reduced conformal
infinity.  Thus the result takes the same form as the result for AH
metrics.   

The proof of the normal form for AH metrics in \cite{GL} is based on the
observation that if $\psi$ puts $g$ into normal form as above, then 
$\xh:=x\circ\psi^{-1}$ satisfies the eikonal equation
\begin{equation}\label{eikonal1}
\Big|\frac{d\xh}{\xh}\Big|^2_g=1.
\end{equation}
By first solving this equation on $X$, one can 
therefore obtain directly the $x$-component of $\psi^{-1}$.  And once one
has $\xh$, it is clear from the normal form \eqref{nform} that the full map
$\psi$ can be 
constructed by following the integral curves of the noncharacteristic 
vector field $\xh^{-1}X_{\xh}$, where $X_{\xh}$ is the vector field dual to
$\frac{d\xh}{\xh}$ with respect to $g$.  Now \eqref{eikonal1} appears to be
a singular 
equation.  But by writing $\xh=e^\om x_0$, where $x_0$ is a fixed defining
function and $\om$ a new unknown, then expanding the left-hand side of
\eqref{eikonal1} and moving a term to the right-hand side, and finally 
dividing by $x_0$, it becomes a noncharacteristic
first order nonlinear pde for $\om$, which is solvable by the method of 
characteristics.  This method of reducing a singular eikonal equation to 
a noncharacteristic initial value problem at infinity and then constructing 
the diffeomorphism by following integral curves of an associated
noncharacteristic vector field can be used to derive normal forms in a
number of other settings.  It  
gives a simple alternate derivation of the normal form for scattering metrics
proved in \cite{JS1}, and is used in \cite{GS} to derive a 
normal form for $\Theta$-metrics.  (In the case of scattering metrics, the 
corresponding eikonal equation is $\big|\frac{d\xh}{\xh^2}\big|^2_g=1$.  One
writes $\xh=x_0+\omega x_0^2$ for a new unknown $\omega$ and divides the
equation by $x_0^2$ rather than $x_0$ to obtain a noncharacteristic initial
value problem for $\omega$.  The diffeomorphism is constructed by
following the integral curves of $\xh^{-2}X_{\xh}$, where $X_{\xh}$ is the
vector field dual to $\frac{d\xh}{\xh^2}$ with respect to $g$.)   

In the edge case we were unable to reduce \eqref{eikonal1} to a
noncharacteristic problem.  The issue is the following.  Equation
\eqref{eikonal1} involves the components of the inverse metric $g^{-1}$.
In the cases of conformally compact or scattering or $\Theta$-metrics, all
components of 
$g^{-1}$ vanish at the boundary.  Because of this, one can divide
\eqref{eikonal1} by the correct power of $x_0$ as described above to obtain
a noncharacteristic problem, and still have an equation with smooth 
coefficients up to the boundary.  
But for edge metrics, the components of $g^{-1}$ along the fibers do not
vanish at $\pa X$.  The coefficient of the derivative transverse to the
boundary still vanishes there, and one cannot carry out  
the division by $x_0$ to make the problem noncharacteristic.  So it seems
that one is forced to deal with a singular equation.    

Joshi studied the normal form for $b$-metrics in \cite{J}.  
A $b$-metric is an edge metric in the opposite extreme case from a
conformally compact metric:  the case in which there is only one boundary 
fiber, the boundary itself.  Although geometrically simpler than
general edge metrics, $b$-metrics already exhibit the essential analytic
difficulty 
of the general case as far as derivation of the normal form.  By formal
calculations, Joshi 
derived the normal form for exact, normalized $b$-metrics modulo error
terms vanishing 
to infinite order at the boundary using the method of \cite{JS1},
\cite{JS2}, 
but was unable to obtain the result in an open set.  In this regard he
commented, ``it is not clear how to proceed''.  The normal form modulo 
infinite-order vanishing error terms was derived for general exact, 
normalized edge metrics in \cite{Ka} by formal analysis of
\eqref{eikonal1}.  (We remark that the literature concerning  
$b$-metrics, for instance \cite{Me} and \cite{J}, defines an exact 
$b$-metric to be    
what we refer to as an exact, normalized $b$-metric.  That is, we 
separate the two separately invariant conditions: exactness and
normalization.)  

To solve \eqref{eikonal1} in the edge case, we derive a result concerning
existence and uniqueness of certain characteristic nonlinear first-order
initial value problems for a real scalar unknown.  We consider problems of
the form  
\begin{equation}\label{ivp}
x\pa_x\om = F(x,y,\om,\pa_y\om),\qquad \om(0,y)=\om_0(y).
\end{equation}
Here $(x,y)$ are coordinates on $\R^n$, $n\geq 1$, with $x\geq 0$,
$y\in\R^{n-1}$, and   
the unknown function $\om(x,y)$ is real-valued.  $F(x,y,\om,q)$ is a smooth
real function of $(x,y,\om,q)\in \R\times \R^{n-1}\times \R\times  
\R^{n-1}$ and the initial value $\om_0$ is a smooth function of $y$.   
\begin{theorem}\label{pdetheorem}
Suppose that for all $y$ one has 
\begin{equation}\label{first}
F(0,y,\om_0(y),\pa_y\om_0(y))=0,\qquad F_q(0,y,\om_0(y),\pa_y\om_0(y))=0
\end{equation}
and 
\begin{equation}\label{second}
F_\om(0,y,\om_0(y),\pa_y\om_0(y))<1.
\end{equation}
Then there exists a unique smooth solution of \eqref{ivp} 
for sufficiently small $x\geq 0$.  
\end{theorem}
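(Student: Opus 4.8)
The plan is to first pin down the entire Taylor expansion at $x=0$ of a would‑be solution by a recursion, and then to promote this formal solution to a genuine smooth one by weighted energy estimates for the associated singular linear operator. \emph{Formal solution.} Seek $\om\sim\sum_{k\ge 0}\om_k(y)x^k$ and substitute into \eqref{ivp}. Since $x\pa_x$ kills the $x^0$ term, matching the $x^0$ coefficient reproduces the first equation in \eqref{first}, $F(0,y,\om_0,\pa_y\om_0)=0$, so the initial condition is consistent. Matching the coefficient of $x^k$ for $k\ge 1$ yields, after expanding $F(x,y,\om,\pa_y\om)$ by the chain rule, a relation of the form
\begin{equation*}
k\,\om_k=F_\om|_0\,\om_k+F_q|_0\cdot\pa_y\om_k+R_k,
\end{equation*}
where $F_\om|_0,F_q|_0$ are the derivatives of $F$ at $(0,y,\om_0,\pa_y\om_0)$ and $R_k$ is a universal smooth expression built from the derivatives of $F$ at that point and from $\om_0,\dots,\om_{k-1}$ together with their $y$‑derivatives. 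The second equation in \eqref{first}, $F_q|_0=0$, deletes the term $\pa_y\om_k$, which is exactly what prevents a loss of $y$‑derivatives in the recursion; condition \eqref{second}, $F_\om|_0<1$, makes $k-F_\om|_0>0$ for every $k\ge 1$, so $\om_k=R_k/(k-F_\om|_0)$ is uniquely and smoothly determined. Thus \eqref{ivp} has a unique formal power series solution, which also fixes the Taylor expansion of any smooth solution.

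\emph{Reduction to an infinite‑order remainder.} By Borel's lemma choose a smooth $\widetilde\om$ with $\widetilde\om\sim\sum_k\om_k x^k$; then $E:=x\pa_x\widetilde\om-F(x,y,\widetilde\om,\pa_y\widetilde\om)$ vanishes to infinite order at $x=0$. Writing $\om=\widetilde\om+u$ and Taylor expanding $F$ about $\widetilde\om$, the correction $u$ must satisfy
\begin{equation}\label{ueq}
x\pa_x u=a(x,y)\,u+x\,b(x,y)\cdot\pa_y u+E+Q(x,y,u,\pa_y u),
\end{equation}
where $a=F_\om(x,y,\widetilde\om,\pa_y\widetilde\om)$ has $a(0,y)=F_\om|_0<1$, the coefficient of $\pa_y u$ carries an explicit factor of $x$ precisely because $F_q|_0=0$, and $Q$ collects the terms at least quadratic in $(u,\pa_y u)$. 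We seek $u$ vanishing to infinite order at $x=0$; such a $u$ also yields uniqueness.

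\emph{Inverting the linearization.} Let $\cL u:=x\pa_x u-a\,u-x\,b\cdot\pa_y u$. The characteristics of $\pa_x-b\cdot\pa_y$ have finite ($O(1)$) speed in $y$, so we may localize in $y$; along them $\cL$ becomes the singular ODE $x\,\tfrac{d}{dx}v=a\,v+(\cdots)$, invertible by the integrating factor $x^{a(0,\cdot)}$, and \eqref{second} is what makes $\int_0^x t^{-a(0,\cdot)-1}(\cdots)\,dt$ converge and keeps $v$ smooth. To control derivatives we use energy estimates: pairing $\cL u$ with $x^{-2\al-1}u$ on a slab $\{0<x<\delta\}$ and integrating the transport term by parts in $y$ gives
\begin{equation*}
\langle \cL u,\,x^{-2\al-1}u\rangle=\int(\al-a)\,x^{-2\al-1}u^2+\tfrac12\int(\div_y b)\,x^{-2\al}u^2+(\text{boundary at }x=\delta).
\end{equation*}
Choosing $\al$ slightly larger than $\sup_y F_\om|_0$, possible precisely because \eqref{second} holds, makes $\al-a\ge c>0$ on a small slab while the middle integral on the right is lower order (it carries a spare $x$). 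This gives a weighted $L^2$ bound $\|u\|\lesssim\|\cL u\|$ \emph{with no loss of $y$‑derivatives}; commuting $\pa_y^\beta$ through $\cL$ produces only commutators with an extra factor of $x$, so the estimate upgrades to every weighted $H^s$, and the same estimate for $\cL^*$ gives solvability of $\cL u=f$ with $u$ of infinite order whenever $f$ is.

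\emph{Nonlinear fixed point and obstacle.} Solve \eqref{ueq} by iterating $\cL u_{j+1}=E+Q(x,y,u_j,\pa_y u_j)$: for $s$ large $Q$ is bounded and quadratically small on weighted $H^s$, and $E=O(x^\infty)$ is small on a small slab, so the map is a contraction; the fixed point $u$ is smooth, vanishes to infinite order, and $\om=\widetilde\om+u$ is the desired solution, with uniqueness following from the energy estimate applied to the difference of two solutions. The main obstacle is exactly the characteristic nature of \eqref{ivp}: the coefficient of $\pa_x$ vanishes at $x=0$, so the problem cannot be made noncharacteristic and $\pa_y u$ genuinely appears, threatening a derivative loss. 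What rescues the argument is the interplay of the two vanishing conditions \eqref{first} — forcing the $y$‑transport coefficient to carry a factor of $x$, so it is skew‑adjoint modulo a gain of $x$ and disappears at leading order from the energy identity — with the strict inequality \eqref{second}, which supplies the coercive weight and the convergence of the singular‑ODE integral. Making these weighted estimates and the accompanying regularity bootstrap precise is the bulk of the technical work.
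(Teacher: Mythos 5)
Your overall strategy --- formal power series, Borel summation, then correcting the infinite-order error via estimates for the Fuchsian linearization --- is a genuinely different route from the paper's. The paper never uses energy estimates: it adapts the method of characteristics, flowing the initial submanifold out in the first jet bundle along ``characteristic integral curves'' of the Hamiltonian vector field (curves solving $t\gamma'(t)=V(t,\gamma(t))$ from a zero of $V$, which exist and are unique when the eigenvalues of $DV$ have real part $<1$), so the whole problem reduces to singular ODE systems with smooth parameter dependence and no derivatives are ever lost. Your formal-series step is correct, and you identify accurately where \eqref{first} and \eqref{second} enter; the weighted estimate for $\cL u=x\pa_xu-au-xb\cdot\pa_yu$ is also fine as far as it goes.

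However, the nonlinear step has a genuine gap: derivative loss. The equation is fully nonlinear in $\pa_y\om$ (in the application it is quadratic), so $Q(x,y,u,\pa_yu)$ contains terms such as $\tfrac12 F_{qq}|_0\,(\pa_yu)^2$ whose coefficients do \emph{not} carry a factor of $x$. In the iteration $\cL u_{j+1}=E+Q(x,y,u_j,\pa_yu_j)$, estimating $\|Q(\cdot,u_j,\pa_yu_j)\|_{H^s}$ requires $\|u_j\|_{H^{s+1}}$, while your estimate for $\cL$ only returns $\|u_{j+1}\|_{H^s}$; the map loses one $y$-derivative per step and is not a contraction on any fixed weighted $H^s$, and smallness in $x$ cannot repair a loss of $y$-regularity. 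Relatedly, the true coefficient of $\pa_yu$ in your equation for $u$ is $B(x,y,u,\pa_yu)=\int_0^1F_q(x,y,\widetilde\om+su,\pa_y\widetilde\om+s\pa_yu)\,ds$, which vanishes at $(x,u,q)=(0,0,0)$ but not identically at $x=0$, so the clean splitting into ``$xb\cdot\pa_yu$ plus a harmless quadratic remainder'' already hides $\pa_yu$-dependence in the principal part. The standard repairs --- a genuinely quasilinear iteration with the $\pa_yu_{j+1}$-coefficient frozen at $u_j$ (uniform high-norm bounds plus contraction in a lower norm), or differentiating the equation and estimating the coupled system for the full $1$-jet $(u,\pa_xu,\pa_yu)$ --- constitute the real work here; the latter is in effect exactly what the paper's jet-bundle/characteristics argument accomplishes. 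As written, your fixed-point argument does not close.
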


We prove Theorem~\ref{pdetheorem} by an adaptation of the method of 
characteristics.  In 
the noncharacteristic case this proceeds by solving the 
ordinary differential equations corresponding to a Hamiltonian flow-out in
the first jet bundle of the solution.  Because our initial value problem is
characteristic, the Hamiltonian vector field vanishes identically on the  
initial submanifold, so there is no flow-out in the usual sense.
Nonetheless, we are able to construct a flow-out by considering 
what we call characteristic integral curves emanating from a zero of a
vector field.  We show these exist under an appropriate hypothesis on the
eigenvalues of the linearization of the vector field at the zero 
(Theorem~\ref{vectfield}) and the union of the characteristic integral
curves of the Hamiltonian vector field starting from the initial
submanifold gives the 1-jet of the 
solution.  It seems likely that Theorem~\ref{pdetheorem} and the method of 
using these characteristic integral curves will be useful in other
problems.  Once we have solved \eqref{eikonal1}, the second part of the
proof of the normal form, constructing the diffeomorphism by flowing along
integral curves, works just as before:  the relevant vector field
$\xh^{-1}X_{\xh}$ is smooth and noncharacteristic.  

In \S 2 we define edge metrics and study the geometric
structure they induce at infinity.  We work with edge metrics of arbitrary 
signature, under an additional nondegeneracy hypothesis which we call
horizontal nondegeneracy.  This is automatic in the case of definite
signature.  We define the notions of normalization, exactness, and
reduced conformal infinity referred 
to above, and show that an arbitrary horizontally nondegenerate edge metric
invariantly induces a metric on each fiber of the boundary.  We formulate
the normal form condition and state the main result, Theorem~\ref{main},
which asserts the existence and uniqueness of the normal form.  We show
that Theorem~\ref{main} follows from the solvability of the eikonal
equation, and reduce the eikonal equation to a problem of the form
\eqref{ivp}.  In \S 3 we prove Theorem~\ref{pdetheorem}.  

Throughout, by smooth we mean infinitely differentiable, and all objects
are assumed smooth unless explicitly stated otherwise.

\section{Edge Metrics}\label{geometry}

Let $(X,\pa X)$ be a manifold-with-boundary.  Suppose that $\pa X$ is the
total space of a fibration
$$
\begin{CD}
F  @>>>  \pa X\\
@.          @VV{\pi}V\\
   @.    Y
\end{CD}
$$
with fiber $F$ and base $Y$.  One says that $(X,\pa X)$ is an edge,
or boundary-fibered, manifold.  A motivating special case is that of a
product $X=M\times F$, where $(M,\pa M)$ is a manifold-with-boundary.
Then $\pa X = \pa M\times F$ with projection $\pi:\pa X\rightarrow \pa M 
=Y$.  

Let $V=\ker \pi_*\subset T\pa X$ denote the vertical vector bundle over
$\pa X$.   
A vector field $\xi$ on $X$ is said to be an {\it edge vector field} if its 
restriction to $\pa X$ is tangent to the fibers.  Equivalently, one
requires that $\xi|_{\pa X}\in \Gamma(V)$.    

Near a point of $\pa X$ one can choose local coordinates $(x,y^\alpha,z^A)$
so that $x$ is a defining function for $\pa X$ with $x>0$ in $X^\circ$, 
$y^\al$ are coordinates on $X$ whose restrictions to $\pa X$ are lifts
of local coordinates on $Y$ (so the $y^\al$ are constant on the fibers),
and $z^A$ are coordinates on $X$ whose 
restrictions to each fiber of $\pa X$ are local coordinates on the fiber.  
The edge vector fields  are then
$\span_{C^\infty(X)}\{x\pa_x,x\pa_{y^\al},\pa_{z^A}\}$.  

There is a vector bundle ${}^eTX$ on $X$, the {\it edge tangent bundle},
which 
can be characterized by the requirement that its smooth sections are the 
edge vector fields.  $x\pa_x$ and the $x\pa_{y^\al}$ define sections of
${}^eTX$ which are nonvanishing on $\pa X$, and in local coordinates near 
$\pa X$ one has 
${}^eT_pX=\span\{x\pa_x,x\pa_{y^\al},\pa_{z^A}\}$ for $p\in X$.  The dual
edge cotangent bundle ${}^eT^*X$ has fibers spanned by the dual basis
$\frac{dx}{x}$, $\frac{dy^{\al}}{x}$, $dz^A$.  
There is
a well-defined evaluation map $\Eval:{}^eTX\rightarrow TX$ with the
property 
that $\Eval_p:{}^eT_pX\rightarrow T_pX$ is an isomorphism for $p\in
X\setminus \pa X$, but $\Eval_p({}^eT_pX)=V_p$ for $p\in \pa X$.  For 
$p\in \pa X$, we define
the horizontal bundle $H_p=\ker \Eval_p\subset {}^eT_pX$, so that 
$H_p=\span\{x\pa_x,x\pa_{y^\al}\}$.  

An {\it edge metric} $g$ on an edge manifold $X$ is a smooth nondegenerate 
section of 
$S^2({}^eT^*X)$.  (We allow metrics of arbitrary signature.)  In local
coordinates near $\pa X$, if we write  
\begin{equation}\label{gform}
g=\left(\begin{array}{ccc}
\frac{dx}{x}&
\frac{dy^{\bf \al}}{x}&
dz^A\\
\end{array}\right)
\left(\begin{array}{ccc}
\gb_{00}&\gb_{0\be}&\gb_{0B}\\
\gb_{\al 0}&\gb_{\al\be}&\gb_{\al B}\\
\gb_{A0}&\gb_{A\be}&\gb_{AB}
\end{array}\right)\left(\begin{array}{c}
\frac{dx}{x}\\
\frac{dy^{\be}}{x}\\
dz^B\\
\end{array}\right),
\end{equation}
then this is the requirement that the $\gb$-matrix be smooth and 
nondegenerate up to $\pa X$.  An edge metric restricts to a usual 
metric on $X^\circ$ which is singular at $\pa X$ with the form of the 
singularity determined by the boundary fibration.  In the special case that 
the fiber $F$ is a point, there are no $z$ variables and an edge metric is
just a conformally compact metric on the manifold-with-boundary 
$(X,\pa X)$.  In the 
special case that $Y$ is a point, there are no $y$ variables, and in this
case edge metrics are called $b$-metrics on the
manifold-with-boundary $(X,\pa X)$.  For a product edge manifold 
$X=M\times F$ as above, a product edge metric is a metric of the form  
$g=g_M+g_F$, where $g_M$ is a conformally compact metric on $(M,\pa M)$ and
$g_F$ is a metric on $F$.  

We will say that an edge metric is {\it horizontally nondegenerate} if 
$g|_{H_p}$ is nondegenerate for all $p\in \pa X$.  Clearly any positive
definite edge metric is horizontally nondegenerate.  A horizontally
nondegenerate metric 
induces a metric on the bundle $H\subset {}^eTX|_{\pa X}$.  This induced
metric may be reinterpreted as a conformal class of metrics on 
$TX|_{\pa X}/V$ as follows.  Multiplication by a defining function $x$ 
induces a map $T_pX\rightarrow {}^eT_pX$ for $p\in \pa X$ with kernel $V_p$
and range $H_p$.  This induces an isomorphism $H\cong TX|_{\pa X}/V$
dependent on the choice of $x$ only up to scale.  Via this isomorphism we
can transfer the metric $g|_H$ to a conformal class of metrics on 
$TX|_{\pa X}/V$, called the conformal infinity of $g$.  (This choice of
terminology is slightly at odds with usual usage in the 
conformally compact case, where the conformal infinity is typically regarded
as a metric on $T\pa X$ rather than on $TX|_{\pa X}$.  One reason for our 
choice is that $T\pa X/V$ has rank 0 for the special case of $b$-metrics.)      
Of course one can realize the representatives in the conformal infinity
directly without recourse to $H$:  the metric $(x^2g)|_{X^\circ}$ extends  
smoothly to $X$ as a section of $S^2T^*X$ and $(x^2g)|_{\pa X}$
annihilates $V$, so it induces a quadratic form on the bundle 
$TX|_{\pa X}/V$ over $\pa X$ which is nondegenerate iff $g$ is horizontally 
nondegenerate.

There is a generalization to edge metrics of the normalization condition
that a conformally compact metric be asymptotically hyperbolic.  The
section $\frac{dx}{x}$ of ${}^eT^*X$ restricts to a section of 
$H^*$ which is independent of $x$.  This is because if $\widehat{x}=ax$
with $0<a\in C^\infty(X)$, then   
$\frac{d\widehat{x}}{\widehat{x}}=\frac{dx}{x}+\frac{da}{a}$, and 
the restriction of $\frac{da}{a}$ to $H$ vanishes.  If $g$ is horizontally
nondegenerate, then we can consider the length squared of $\frac{dx}{x}|_H$
with respect to $g|_H$, and this will be an an invariant of $g$.   
\begin{definition}\label{normalized}
An edge metric $g$ is said to be {\it normalized} if $g$ is
horizontally nondegenerate and if 
$$
\bigg|\frac{dx}{x}\big|_H\bigg|^2_{g|_H}=1\qquad \mbox{on}\;\; \pa X.   
$$ 
\end{definition}

\begin{remark}
 In the case of indefinite signature, one could equally well consider the 
condition $\big|\frac{dx}{x}\big|_H\big|^2_{g|_H}=-1$.  Our treatment
applies to this case upon replacing $g$ by $-g$.  
\end{remark}

We can also consider the length squared of $\frac{dx}{x}$ with respect to
$g$ on all of ${}^eTX$.  In general this will depend on $x$ since
$\frac{da}{a}|_{\pa X}$ is a nontrivial section of ${}^eT^*X|_{\pa X}$ if
$a$ varies along the fibers.   

\begin{definition}\label{gnormalized}
Let $g$ be an edge metric and $x$ a defining function for $\pa X$.  
$x$ is said to be {\it $g$-normalized} if $|\frac{dx}{x}|^2_g=1$ on $\pa
X$.    
\end{definition}

We make these invariant conditions explicit in local coordinates.  If
$g$ is written as \eqref{gform}, then
$g|_H$ is represented by 
$
\left(\begin{array}{cc}
\gb_{00}&\gb_{0\be}\\
\gb_{\al 0}&\gb_{\al\be}\\
\end{array}\right).
$
Horizontal nondegeneracy of $g$ is the requirement that this quadratic
form be nondegenerate at $\pa X$.  In this case, the dual metric is given
by the inverse matrix, so we write 
$$
\left(\begin{array}{cc}
\gb_{00}&\gb_{0\be}\\
\gb_{\al 0}&\gb_{\al\be}\\
\end{array}\right)^{-1}
=\left(\begin{array}{cc}
C&*\\
*&*\\
\end{array}\right),
$$
and $C=\big|\frac{dx}{x}\big|_H\big|^2_{g|_H}$.  
Thus $g$ is normalized means exactly that $C=1$,
and this condition is independent of the choice of all the coordinates.   

On the other hand, $\frac{dx}{x}$ is a dual basis vector in the full frame.   
We write 
$$
\left(\begin{array}{ccc}
\gb_{00}&\gb_{0\be}&\gb_{0B}\\
\gb_{\al 0}&\gb_{\al\be}&\gb_{\al B}\\
\gb_{A0}&\gb_{A\be}&\gb_{AB}
\end{array}\right)^{-1}
=
\left(\begin{array}{ccc}
\gb^{00}&\gb^{0\be}&\gb^{0B}\\
\gb^{\al 0}&\gb^{\al\be}&\gb^{\al B}\\
\gb^{A0}&\gb^{A\be}&\gb^{AB}
\end{array}\right),
$$
and then $\gb^{00}=|\frac{dx}{x}|_g^2$.  So $x$ is $g$-normalized means  
$\gb^{00}=1$.  This
condition is independent of the choice of $y^\al$, $z^A$, but in general
does depend on the choice of $x$.  

A horizontally nondegenerate edge metric invariantly induces a 
pseudo-Riemannian metric on the fibers of $\pa X$.  Let $g$ be an edge
metric.  The induced 
dual metric on ${}^eT^*X$ is a section $g^{-1}$ of $S^2({}^eTX)$.  Thus 
$\Eval(g^{-1})$ is a smooth section of $S^2TX$.  On $\pa X$, this section
of $S^2TX$ degenerates:  its restriction to $\pa X$ is a smooth section of
$S^2V\subset S^2TX$.  Elementary linear algebra (most easily carried out in
terms of the explicit
formulation of these conditions below) shows that the condition that 
$g$ is horizontally nondegenerate is equivalent to the 
condition that $\Eval(g^{-1})$ is a nondegenerate section of $S^2V$.  So if
$g$ is horizontally nondegenerate, $\Eval(g^{-1})$ defines a metric
on $V^*$.  Its dual is a metric on $V$, or equivalently a pseudo-Riemannian
metric on each fiber of $\pa X$.  We denote this induced metric on the
fibers by $g_F$.  

Concretely:  
\begin{equation}\label{ginverse}
g^{-1}=
\left(\begin{array}{ccc}
x\pa_x&x\pa_{y^\al}&\pa_{z^A}\\
\end{array}\right)
\left(\begin{array}{ccc}
\gb^{00}&\gb^{0\be}&\gb^{0B}\\
\gb^{\al 0}&\gb^{\al\be}&\gb^{\al B}\\
\gb^{A0}&\gb^{A\be}&\gb^{AB}
\end{array}\right)\left(\begin{array}{c}
x\pa_x\\
x\pa_{y^\be}\\
\pa_{z^B}\\
\end{array}\right),
\end{equation}
so
$\Eval(g^{-1})|_{\pa X}=\gb^{AB} \pa_{z^A}\pa_{z^B}$.  Nondegeneracy of   
$
\left(\begin{array}{cc}
\gb_{00}&\gb_{0\be}\\
\gb_{\al 0}&\gb_{\al\be}\\
\end{array}\right)
$
is equivalent to nondegeneracy of $\gb^{AB}$.
The induced metric on the fibers is $(g_F)_{AB}dz^Adz^B$, where
$(g_F)_{AB}=(\gb^{AB})^{-1}$.  This metric $g_F$ is independent of the
choice of all coordinates.  

Next we introduce the notion of an exact edge metric.  Let $g$ be an edge
metric and $x$ a defining function.  Now $\frac{dx}{x}$ is a smooth 
section of ${}^eT^*X$.  Let $X_x$ be the edge vector field dual to
$\frac{dx}{x}$ with respect to $g$.  $\Eval(X_x)|_{\pa X}$ is then a
section of $V$.  If $g$ is horizontally nondegenerate, we can 
define a section $\alpha_x$ of $V^*$ to be its dual with respect to $g_F$.
Thus we have associated to each defining function $x$ a 1-form $\alpha_x$
on the fibers of $\pa X$.  If $\widehat{x}=ax$ is another defining
function, then 
$\frac{d\widehat{x}}{\widehat{x}}=
\frac{dx}{x} + \frac{da}{a}=\frac{dx}{x} + d \log a$.  
Following through the definition shows that the corresponding 1-forms 
are related by $\al_{ax}=\al_x+d_V\log a$, where $d_V\log a=d\log a|_V$.
Thus 
$\al_x$ changes by an exact form under change of defining function.
\begin{definition}\label{exact}
An edge metric $g$ is said to be {\it exact} if $g$ is
horizontally nondegenerate and if for each defining function $x$, there is   
$f\in C^\infty(\pa X)$ so that $\al_x=d_Vf$.  
\end{definition}

\noindent
The above reasoning shows that if this holds for one $x$, it holds for all
$x$.  If $g$ is exact, then by correct choice of $a$ we can 
find $x$ so that $\al_x=0$.

\begin{definition}\label{grelated}
If $g$ is an exact edge metric, a defining function $x$ is said to be {\it
$g$-related} if $\al_x=0$.  
\end{definition}

\noindent
If $x$ is $g$-related, then another defining function $\widehat{x}=ax$ is
also $g$-related if and only if $d_Va=0$, i.e. $a|_{\pa X}$ is locally
constant on the fibers.  So $g$-related defining functions are determined
precisely up 
to multiplication by a positive function whose restriction to $\pa X$ is 
locally constant on the fibers.  

Clearly $x$ is $g$-related if and only if $\Eval(X_x)|_{\pa X}=0$.  
Since $g^{-1}$ is given by \eqref{ginverse},
we deduce that $x$ is $g$-related if and only if $\gb^{0B}|_{\pa X}=0$, and
this condition is independent of the choice of $y^\al$ and $z^A$.  

Recall that a horizontally nondegenerate edge metric $g$ induces a conformal
class of metrics on $TX|_{\pa X}/V$ with representatives $(x^2g)|_{\pa X}$.   
If $g$ is exact, we can restrict to representatives of the conformal class
which arise from $g$-related defining functions $x$.  We will call this the
{\it reduced conformal infinity} of $g$.  Metrics in the reduced conformal
infinity are determined up to rescaling by functions which are 
locally constant on the fibers of $\pa X$.  Choosing a representative
metric in the 
reduced conformal infinity is entirely equivalent to choosing a $g$-related
defining function $x$ to first order at $\pa X$.  

The following lemma will be useful in the sequel.
\begin{lemma}\label{normlem}
Let $g$ be an exact edge metric and $x$ a $g$-related defining function.  
Then $\big|\frac{dx}{x}\big|_H\big|^2_{g|_H} =|\frac{dx}{x}|^2_g$ on $\pa
X$.  
In particular, if $g$ is 
exact and normalized, then every $g$-related defining function is
$g$-normalized.  
\end{lemma}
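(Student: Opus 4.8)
The plan is to argue invariantly, using the characterization of $g$-relatedness recorded just above the lemma. Set $\omega:=\frac{dx}{x}$, regarded as a section of ${}^eT^*X$ along $\pa X$, and let $X_x$ be its $g$-dual edge vector field, so that $g(X_x,\cdot)=\langle\omega,\cdot\rangle$ on ${}^eTX$ and hence $|\omega|^2_g=\langle\omega,X_x\rangle=g(X_x,X_x)$. The crucial point is that $g$-relatedness places $X_x$ in the horizontal bundle along the boundary: since $x$ is $g$-related iff $\Eval(X_x)|_{\pa X}=0$, and $H=\ker\Eval$ on $\pa X$, this says precisely $X_x|_{\pa X}\in H$.

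Granting $X_x\in H$, the two norms coincide by a short computation. For every $W\in H$ one has $g(X_x,W)=\langle\omega,W\rangle=\langle\omega|_H,W\rangle$, the last step only because $W$ is horizontal, so restricting the covector $\omega$ to $H$ does not change its pairing with $W$. By horizontal nondegeneracy $g|_H$ is nondegenerate, so this identity characterizes $X_x$ as the $g|_H$-dual of $\omega|_H$; thus $X_x$ is simultaneously the $g$-dual of $\omega$ and the $g|_H$-dual of $\omega|_H$. Taking $W=X_x$ then gives $\big|\frac{dx}{x}\big|_H\big|^2_{g|_H}=\langle\omega|_H,X_x\rangle=\langle\omega,X_x\rangle=|\frac{dx}{x}|^2_g$ on $\pa X$, which is the first assertion.

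Equivalently, one may verify the identity by block linear algebra on the matrix in \eqref{gform}, in the spirit of the explicit computations above. Let $A$ denote the horizontal block with entries $\gb_{00},\gb_{0\be},\gb_{\al 0},\gb_{\al\be}$, invertible by horizontal nondegeneracy, let $B$ be the horizontal-vertical block, and let $R=(\gb^{AB})$, also invertible by horizontal nondegeneracy. Then $C=(A^{-1})_{00}$ by definition, while the Schur complement formula for the full inverse gives $\gb^{00}=(A^{-1})_{00}+(e_0^\top A^{-1}B)\,R\,(B^\top A^{-1}e_0)$ and $\gb^{0B}=-(A^{-1}BR)_{0B}$. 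The $g$-related condition $\gb^{0B}|_{\pa X}=0$ says the $0$-row of $A^{-1}BR$ vanishes; since $R$ is invertible this forces $e_0^\top A^{-1}B=0$ on $\pa X$, so the correction term drops out and $\gb^{00}=C$ there.

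With the first assertion in hand the final statement is immediate: $\big|\frac{dx}{x}\big|_H\big|^2_{g|_H}$ is independent of the defining function because $\frac{dx}{x}|_H$ is, so normalization of $g$ means this quantity equals $1$ on $\pa X$; combined with the equality just proved, any $g$-related $x$ then satisfies $|\frac{dx}{x}|^2_g=1$ on $\pa X$, i.e. $x$ is $g$-normalized. I expect no essential obstacle here; the only step needing care is the passage from $g$-relatedness to $X_x\in H$ in the invariant argument, or, equivalently, the observation that $\gb^{0B}|_{\pa X}=0$ annihilates the off-diagonal coupling term in $\gb^{00}$ in the matrix argument.
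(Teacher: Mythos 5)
Your proposal is correct. Your second, block-matrix argument is essentially the paper's own proof: both invert the $\gb$-matrix in block form via the Schur complement, observe that the $g$-related condition $\gb^{0B}|_{\pa X}=0$ forces the first row of $T^{-1}U$ (your $e_0^\top A^{-1}B$) to vanish because the Schur-complement inverse is invertible, and conclude that the correction term in $\gb^{00}$ drops out so that $\gb^{00}=C$. Your first argument, however, is a genuinely coordinate-free route that the paper does not record: since $H=\ker\Eval$ on $\pa X$ and $g$-relatedness is exactly $\Eval(X_x)|_{\pa X}=0$, the dual vector $X_x$ lies in $H$ along the boundary, hence is simultaneously the $g$-dual of $\frac{dx}{x}$ and the $g|_H$-dual of $\frac{dx}{x}\big|_H$, and both squared lengths equal $\langle \frac{dx}{x}, X_x\rangle$. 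This is cleaner and explains conceptually why the two norms agree; moreover, unlike the simplification sketched in the paper's remark following the lemma (a change of $y$-coordinates), it does not need the hypothesis $|\frac{dx}{x}|^2_g\neq 0$. Your deduction of the final sentence, using that $\frac{dx}{x}\big|_H$ is independent of the choice of defining function, matches the paper and is correct.
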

\begin{proof}
Choose local coordinates $(x,y^\al,z^A)$, taking $x$ to be the given
$g$-related defining function.  Recall that if  
$$
M=
\left(\begin{array}{cc}
T & U\\
V & W \\
\end{array}\right)
$$
is an invertible matrix in block form with $T$ and $W$ square and $T$
invertible, then its inverse can be written
$$
M^{-1}=
\left(\begin{array}{cc}
T^{-1}+T^{-1}US^{-1}VT^{-1} & -T^{-1}US^{-1}\\
-S^{-1}VT^{-1} & S^{-1}\\ 
\end{array}\right), 
$$
where $S=W-VT^{-1}U$
necessarily is invertible.  Apply this with 
$$
M=
\left(\begin{array}{ccc}
\gb_{00}&\gb_{0\be}&\gb_{0B}\\
\gb_{\al 0}&\gb_{\al\be}&\gb_{\al B}\\
\gb_{A0}&\gb_{A\be}&\gb_{AB}
\end{array}\right),
$$
$$
T=
\left(\begin{array}{cc}
\gb_{00}&\gb_{0\be}\\
\gb_{\al 0}&\gb_{\al\be}\\
\end{array}\right),
\qquad
W=\left(\gb_{AB}\right),
\qquad
U=V^t=
\left(\begin{array}{c}
\gb_{0B}\\
\gb_{\al B}
\end{array}\right),
$$
all evaluated at $\pa X$.  
The hypothesis that $x$ is $g$-related says exactly that the first row 
of $T^{-1}US^{-1}$ vanishes.  Hence the first row of
$T^{-1}US^{-1}VT^{-1}$ also vanishes.  Hence the first row of 
$T^{-1}+T^{-1}US^{-1}VT^{-1}$ agrees with the first row of $T^{-1}$.  In
particular, their ${}^{00}$ components agree, which is the desired
conclusion.  
\end{proof}

\begin{remark}
A simpler proof can be given if one assumes that 
$|\frac{dx}{x}|^2_g\neq 0$ on $\pa X$.  This is of course automatic for $g$  
positive definite.  Under this hypothesis one can make a change of the 
$y$-variables 
$y^\al\rightarrow y^\al + \lambda^\al x$, with $\lambda^\al|_{\pa X}$
chosen to make $\gb^{0\al}|_{\pa X}=0$.  In the 
new coordinates one has $\gb^{0\al}=0$ and $\gb^{0A}=0$ on $\pa X$, and the
conclusion is clear.  
\end{remark}

Next we formulate the normal form condition.  If $X$ is an edge manifold,
then $\pa X$ is the total space of a fibration.  Consider 
$[0,\infty)\times \pa X$ as a manifold-with-boundary, with boundary 
$\{0\}\times \pa X\cong   \pa X$.  The given fibration of $\pa X$ induces a 
natural edge manifold structure on $[0,\infty)\times \pa X$.  The coordinate
$x$ of the first factor is a canonical defining function on 
$[0,\infty)\times \pa X$.  

\begin{definition}\label{normalformdef}
An edge metric $g$ on a neighborhood $\cU$ of $\{0\}\times \pa X$ in  
$[0,\infty)\times \pa X$ is in {\it normal form} if  
$
g=\frac{dx^2}{x^2} + k,
$
where $k$ is a smooth section of ${}^eT^*\cU$
satisfying $x\pa_x\into k =0$ everywhere.  
\end{definition}

This is equivalent to requiring that $g$ have the form
$$
g=\left(\begin{array}{ccc}
\frac{dx}{x}&
\frac{dy^{\bf \al}}{x}&
dz^A\\
\end{array}\right)
\left(\begin{array}{ccc}
1&0&0\\
0&\gb_{\al\be}&\gb_{\al B}\\
0&\gb_{A\be}&\gb_{AB}
\end{array}\right)\left(\begin{array}{c}
\frac{dx}{x}\\
\frac{dy^{\be}}{x}\\
dz^B\\
\end{array}\right),
$$
and $k$ is given by
$$
k=\left(\begin{array}{cc}
\frac{dy^{\bf \al}}{x}&
dz^A\\
\end{array}\right)
\left(\begin{array}{cc}
\gb_{\al\be}&\gb_{\al B}\\
\gb_{A\be}&\gb_{AB}
\end{array}\right)\left(\begin{array}{c}
\frac{dy^{\be}}{x}\\
dz^B\\
\end{array}\right).
$$
Observe that $g$ is exact and normalized.  Also $x$ is $g$-normalized and
$g$-related.  

The main theorem asserts that any exact, normalized edge metric $g$ can be 
put into normal form, and the normal forms for $g$ are parametrized
by the $g$-related defining functions to first order, or equivalently by
the representatives for the reduced conformal infinity.  

\begin{theorem}\label{main}
Let $X$ be an edge manifold and $g$ an exact, normalized edge metric.  If
$x_0$ is a $g$-related defining function, then there is a unique
diffeomorphism $\psi$ from a neighborhood of 
$\{0\}\times \pa X$ in $[0,\infty)\times \pa X$ to a neighborhood of $\pa
X$ in $X$, 
such that $\psi|_{\pa X}=Id$, $\psi^*g$ is in normal form, and
$\psi^*x_0=x+O(x^2)$.  
\end{theorem}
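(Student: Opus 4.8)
The plan is to follow the two-step strategy of the introduction: first produce a defining function $\xh$ on $X$ solving the eikonal equation \eqref{eikonal1}, and then construct $\psi$ by flowing out from $\pa X$ along the vector field $\xh^{-1}X_{\xh}$. All of the difficulty sits in the first step, which I would handle by reducing \eqref{eikonal1} to an initial value problem of the form \eqref{ivp} and invoking Theorem~\ref{pdetheorem}.

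For the reduction, choose local coordinates $(x_0,y^\al,z^A)$ with the given $g$-related $x_0$ as defining function and write the unknown as $\xh=e^\om x_0$, so that $\frac{d\xh}{\xh}=\frac{dx_0}{x_0}+d\om$. In the edge coframe the components of $d\om$ are $(x_0\pa_{x_0}\om,\,x_0\pa_{y^\al}\om,\,\pa_{z^A}\om)$, so those of $\frac{d\xh}{\xh}$ are $(1+x_0\pa_{x_0}\om,\,x_0\pa_{y^\al}\om,\,\pa_{z^A}\om)$. Substituting into \eqref{eikonal1} and contracting with the inverse matrix $\gb^{ij}$ from \eqref{ginverse} gives a quadratic equation in $P:=1+x_0\pa_{x_0}\om$ whose coefficients involve the $\gb^{ij}$ together with $x_0\pa_{y^\al}\om$ and $\pa_{z^A}\om$, but not $\om$ itself. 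Solving for the branch with $P\to1$ on $\pa X$ expresses $x_0\pa_{x_0}\om=F(x_0,y,z,\pa_y\om,\pa_z\om)$, smooth near the boundary since the discriminant and $\gb^{00}$ are both near $1$ there (extending $F$ to a globally smooth function outside this region does not affect the solution near $\pa X$). The correct initial condition is $\om_0=0$: the requirement $\psi^*x_0=x+O(x^2)$ forces $\xh=x_0+O(x_0^2)$, hence $\om|_{\pa X}=0$.

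The crux is verifying hypotheses \eqref{first} and \eqref{second} for this $F$, and this is exactly where the geometric assumptions enter. Since $g$ is normalized and $x_0$ is $g$-related, Lemma~\ref{normlem} gives $\gb^{00}=1$ on $\pa X$; with $\om_0=0$ all momenta vanish there, the quadratic reduces to $P^2=1$, and $F=0$, giving the first equation in \eqref{first}. For $F_q=0$ I would differentiate the defining quadratic implicitly at the initial data, where $P=1$ and $\gb^{00}=1$: the derivative in a momentum $\pa_{y^\al}\om$ vanishes because that momentum enters $F$ only through the combination $x_0\pa_{y^\al}\om$, which carries a factor $x_0$, while the derivative in a momentum $\pa_{z^A}\om$ reduces to a multiple of $\gb^{0A}$, which vanishes on $\pa X$ precisely because $x_0$ is $g$-related. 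Finally $F_\om=0<1$ is automatic since $F$ is independent of $\om$, so \eqref{second} holds trivially. Theorem~\ref{pdetheorem} then yields a unique smooth $\om$, and the local solutions patch by uniqueness to a global $\xh=e^\om x_0$ solving \eqref{eikonal1} with $\xh=x_0+O(x_0^2)$.

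With $\xh$ in hand the remainder is routine, as in the AH case. The edge vector field $X_{\xh}$ dual to $\frac{d\xh}{\xh}$ satisfies $g(X_{\xh},X_{\xh})=1$ by \eqref{eikonal1}, and $\xh$ is itself $g$-related (as $\om|_{\pa X}=0$ makes $d_V\om|_{\pa X}=0$), so $\Eval(X_{\xh})$ vanishes on $\pa X$ and $W:=\xh^{-1}X_{\xh}$ extends to a smooth vector field transverse to $\pa X$ (its $\pa_{x_0}$-component on $\pa X$ equals $\gb^{00}=1$). One checks $W\xh=1$, so defining $\psi(x,q)$ as the time-$x$ flow of $W$ from $q\in\pa X$ gives a diffeomorphism onto a neighborhood of $\pa X$ with $\psi|_{\pa X}=Id$ and $\psi^*\xh=x$. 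Then $\psi_*(x\pa_x)=X_{\xh}$, whence $(\psi^*g)(x\pa_x,x\pa_x)=1$ and, for any edge vector $v$, $(\psi^*g)(x\pa_x,v)=g(X_{\xh},\psi_*v)=\frac{d\xh}{\xh}(\psi_*v)=\frac{dx}{x}(v)$; thus the first row of the edge-metric matrix of $\psi^*g$ is $(1,0,0)$ and $\psi^*g$ is in normal form, while $\xh=x_0+O(x_0^2)$ and $\psi^*\xh=x$ give $\psi^*x_0=x+O(x^2)$. Uniqueness follows by reversing these steps: any competitor $\psi'$ forces $x\circ\psi'^{-1}$ to solve the same reduced problem, hence to equal $\xh$ by the uniqueness in Theorem~\ref{pdetheorem}, after which $\psi'$ must be the flow-out of $W$ and so equals $\psi$. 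The main obstacle is the reduction itself: arranging that \emph{both} conditions in \eqref{first} hold, which is possible only because exactness ($\gb^{0A}|_{\pa X}=0$), normalization ($\gb^{00}|_{\pa X}=1$), and the edge scaling of the $y$-derivatives together render the problem characteristic of exactly the type covered by Theorem~\ref{pdetheorem}.
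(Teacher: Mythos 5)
Your proposal is correct and follows essentially the same route as the paper: reduce the eikonal equation $\bigl|\tfrac{d\xh}{\xh}\bigr|^2_g=1$ with $\xh=e^\om x_0$ to a characteristic IVP of the form \eqref{ivp}, verify \eqref{first} and \eqref{second} using normalization (via Lemma~\ref{normlem}) and the $g$-related condition ($\gb^{0A}|_{\pa X}=0$) exactly as the paper does, and then build $\psi$ by flowing along $\xh^{-1}X_{\xh}$. The only cosmetic difference is that you solve the quadratic in $P=1+x_0\pa_{x_0}\om$ by branch selection where the paper invokes the implicit function theorem on the quadratic polynomial $Q$, which the paper itself notes is interchangeable.
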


The main step in the proof is to solve the eikonal equation:
\begin{proposition}\label{eikonal}
Let $X$ be an edge manifold and $g$ an exact, normalized edge metric.  If
$x_0$ is a $g$-related defining function, then in a neighborhood of $\pa X$
there is a $g$-related 
defining function $\xh$, uniquely determined by the conditions 
$$
\Big| \frac{d\xh}{\xh}\Big|^2_g=1,\qquad\quad \xh =x_0+O(x_0^2).
$$
\end{proposition}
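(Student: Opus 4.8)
The plan is to follow the strategy used for AH metrics: write $\xh=e^\om x_0$ for a new scalar unknown $\om$, rewrite the eikonal equation as an initial value problem of the form \eqref{ivp} for $\om$, and invoke Theorem~\ref{pdetheorem}. Since $\xh$ and $x_0$ are positive defining functions, $\om:=\log(\xh/x_0)$ is a well-defined smooth function, and the requirement $\xh=x_0+O(x_0^2)$ is equivalent to $\om=O(x_0)$, i.e.\ $\om|_{\pa X}=0$. Thus the initial data in \eqref{ivp} is $\om_0=0$, and from $\frac{d\xh}{\xh}=\frac{dx_0}{x_0}+d\om$ one sees that the eikonal equation depends only on the derivatives of $\om$, never on $\om$ itself.

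I would work in local coordinates $(x_0,y^\al,z^A)$ adapted to $x_0$ as in \eqref{gform}, so that $(y^\al,z^A)$ together play the role of the variable ``$y$'' in Theorem~\ref{pdetheorem}. Expanding $d\om$ in the edge coframe gives the components of $\frac{d\xh}{\xh}$, namely $p_0=1+x_0\pa_{x_0}\om$, $p_\al=x_0\pa_{y^\al}\om$, $p_A=\pa_{z^A}\om$, so that $\big|\frac{d\xh}{\xh}\big|^2_g=\gb^{ij}p_ip_j=1$ with $\gb^{ij}$ as in \eqref{ginverse}. Setting $u:=x_0\pa_{x_0}\om$, the left-hand side is a quadratic polynomial in $u$ whose coefficients are smooth in $(x_0,y,z)$ and in the tangential derivatives $q=(\pa_y\om,\pa_z\om)$. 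Its leading coefficient is $\gb^{00}$, which equals $1$ on $\pa X$ by Lemma~\ref{normlem} (as $g$ is normalized and $x_0$ is $g$-related), and at the initial data the two roots are $u=0$ and $u=-2$; the discriminant is positive near $\pa X$ by continuity. A smooth $\om$ with $\om|_{\pa X}=0$ forces $u|_{\pa X}=0$, selecting the root through $u=0$, and this yields $x_0\pa_{x_0}\om=F(x_0,y,z,\om,q)$ with $F$ smooth---exactly the form \eqref{ivp}. (After a cutoff one may take $F$ smooth on all of the relevant space without changing the local solution.)

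It then remains to check the hypotheses of Theorem~\ref{pdetheorem}. Since $F$ is independent of $\om$, condition \eqref{second} holds trivially as $F_\om\equiv 0<1$. For \eqref{first}, evaluating at $x_0=0$, $\om=0$, $q=0$ gives $F=0$ directly, while $F_q=0$ follows on differentiating the quadratic and using that the $\gb^{0\be}$ terms carry a factor of $x_0$ and that $\gb^{0B}|_{\pa X}=0$ because $x_0$ is $g$-related. Theorem~\ref{pdetheorem} then provides a unique smooth $\om$ for small $x_0\geq 0$; set $\xh=e^\om x_0$. As $\om|_{\pa X}=0$, its vertical differential vanishes on $\pa X$, so $\al_{\xh}=\al_{x_0}+d_V\om=0$ and $\xh$ is again $g$-related; by construction it satisfies the eikonal equation and $\xh=x_0+O(x_0^2)$. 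For uniqueness, any admissible $\xh$ gives such an $\om$, which is unique by the theorem; and because the construction is coordinate-invariant, the local solutions agree on overlaps and patch to a single $\xh$ defined near all of $\pa X$.

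The step I expect to be the main obstacle is the reduction itself: recognizing that the singular, quadratic-in-$\pa_{x_0}\om$ eikonal equation can be cast in the admissible form \eqref{ivp}, and verifying the degenerate conditions \eqref{first}. Both the vanishing of $F$ and of $F_q$ on the initial submanifold rely essentially on $g$ being normalized and $x_0$ being $g$-related; these are exactly the geometric inputs that make the characteristic initial value problem of Theorem~\ref{pdetheorem} applicable.
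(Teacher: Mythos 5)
Your proposal is correct and follows essentially the same route as the paper: substitute $\xh=e^\om x_0$, observe that the eikonal equation becomes a quadratic in $x_0\pa_{x_0}\om$ solvable for that variable near the root $u=0$ (the paper invokes the implicit function theorem ``or the quadratic formula''), verify the degenerate hypotheses \eqref{first}--\eqref{second} using Lemma~\ref{normlem} and the $g$-related condition $\gb^{0B}|_{\pa X}=0$, and apply Theorem~\ref{pdetheorem}, patching local solutions by uniqueness. The only cosmetic difference is that you make the two roots $u=0$, $u=-2$ and the role of $\gb^{00}=1$ explicit, which the paper leaves implicit.
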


\medskip
\noindent
Theorem~\ref{main} follows from Proposition~\ref{eikonal} by the usual
argument of flowing along integral curves:  

\bigskip
\noindent
{\it Proof of Theorem~\ref{main}.}  
Let $\xh$ be as in Proposition~\ref{eikonal}.  
Recall that $X_{\xh}$ is the edge vector field dual to $d\xh/\xh$ with
respect to 
$g$, and $\Eval(X_{\xh})=0$ on $\pa X$ since $\xh$ is $g$-related.
Consequently  
$N:=\xh^{-1}\Eval(X_{\xh})$ is a smooth vector field up to $\pa X$, and 
$N\xh=|\frac{d\xh}{\xh}|^2_g=1$.  In particular, $N$ is transverse to $\pa
X$. For $x \geq 0$ and $p\in \pa X$, define $\psi(x,p)$ to be the 
point obtained by following the integral curve 
of $N$ emanating from $p$ for $x$ units of time.  Since $N\xh=1$, we
have $\psi^*\xh=x$, and $N$ is orthogonal to the level sets of
$\xh$ since $X_{\xh}$ is dual to $d\xh/\xh$.  Thus $\psi^*g$ has the desired 
form.  
\stopthm

We conclude this section by reducing Proposition~\ref{eikonal} to the
solution of  
a singular initial value problem of the form considered in
Theorem~\ref{pdetheorem}.  It suffices to 
prove Proposition~\ref{eikonal} 
locally in a neighborhood of a boundary point, since the uniqueness implies
that the local solutions will piece together to form a global solution.  
Relabel $x_0$ as $x$ and write $\xh=e^{\om}x$.  Our new unknown is 
$\om$, with boundary condition $\om=0$ at $x=0$.  Now 
$\frac{d\xh}{\xh} = \frac{dx}{x} +d\om$, so the equation 
$\big|\frac{d\xh}{\xh}\big|^2_g=1$ becomes 
\begin{equation}\label{expandeq}
2X_x\om +|d\om|^2_g = 1-\Big|\frac{dx}{x}\Big|^2_g,
\end{equation}
where we now neglect the distinction between $X_x$ and $\Eval(X_x)$.
Lemma~\ref{normlem} shows that $x$ is $g$-normalized, so the right-hand
side vanishes at $\pa X$.  Work in local coordinates
$(x,y^{\al},z^A)$ as above.  
The left-hand side is a quadratic polynomial
in $x\pa_x\om$, $x\pa_{y^\al}\om$, and $\pa_{z^A}\om$ with no constant term
and with coefficients smooth up to the boundary.  It follows that
\eqref{expandeq} can be written as
\begin{equation}\label{Q}
Q(x,y,z,x\pa_x\om,\pa_y\om,\pa_z\om)=f(x,y,z),
\end{equation}
where $Q$ is a quadratic 
polynomial in $(x\pa_x\om,\pa_y\om,\pa_z\om)$ with no constant term and
with coefficients depending on $(x,y,z)$ which are smooth up to $x=0$, and 
$f$ is smooth with $f(0,y,z)=0$.  (We have absorbed the $x$ multiplying
$\pa_y\om$ into the coefficients.)  We make the following observations
about $Q$.  First, the coefficient of the linear term $x\pa_x\om$ is
nonzero at $x=0$, since 
$X_xx=x\big|\frac{dx}{x}\big|^2_g$.  Second, the coefficients of the linear
terms $\pa_y\om$ and $\pa_z\om$ vanish at $x=0$, since $x$ is $g$-related
so that $X_x=0$ at $x=0$.  Third, all of the arguments
$(x\pa_x\om,\pa_y\om,\pa_z\om)$ themselves vanish at $x=0$ when evaluated
on any function $\om$ satisfying the initial condition $\om=0$ at $x=0$.
In particular, the partial derivative of the quadratic terms of $Q$ with
respect to any of $x\pa_x\om$, $\pa_y\om$, $\pa_z\om$ vanishes at $x=0$
when evaluated on the initial data. 

The implicit function theorem (or the quadratic formula) implies that in a
neighborhood of $(x,y,z,x\pa_x\om,\pa_y\om,\pa_z\om)=(0,y,z,0,0,0)$,
\eqref{Q} may be solved for $x\pa_x\om$.  So it may be written in the form  
\begin{equation}\label{reduced}
x\pa_x\om = F(x,y,z,\pa_y\om,\pa_z\om),
\end{equation}
where $F$ is a smooth function of its arguments satisfying
$F(0,y,z,0,0)=0$.  
Moreover, the observations above show that  
$F_{\pa_y\om}(0,y,z,0,0)=0$ and $F_{\pa_z\om}(0,y,z,0,0)=0$.
Equation \eqref{reduced} with initial condition $\om=0$ at $x=0$ is of the 
form \eqref{ivp}, where $y$ in \eqref{ivp} plays the role of $(y,z)$ in
\eqref{reduced}.  Condition \eqref{second} holds since $F$ in  
\eqref{reduced} is independent of $\om$, so that $F_\om\equiv 0$.  Thus
Proposition~\ref{eikonal} follows from Theorem~\ref{pdetheorem}.   

\section{Singular Initial Value Problems}\label{analysis}

In this section we prove Theorem~\ref{pdetheorem}.  First observe that 
the conclusion in Theorem~\ref{pdetheorem} fails without the hypothesis   
$F_\om(0,y,\om_0(y),\pa_y\om_0(y))<1$.  For instance, the equation
$x\pa_x\om = \om$ has infinitely many smooth solutions $\om = cx$
satisfying $\om(0)=0$, and the equation $x\pa_x\om = \om +x$ has no smooth 
solutions (the general solution is $\om = cx+x\log x$).  Also note  
that if 
$0<F_\om(0,y,\om_0(y),\pa_y\om_0(y))<1$, then the smooth solution need not
be the only continuous solution. For example, if $0<\al<1$, then $\om =cx^\al$ 
solves $x\pa_x\om = \alpha \om$ with $\om(0)=0$ for any $c\in \R$.  In this
case the unique smooth solution is $\om = 0$.     

We first use a standard reduction technique via Taylor
expansion to reduce the equation to a simpler form.  In the following we
denote 
$F_\om^{(0)}(y)=F_\om(0,y,\om_0(y),\pa_y\om_0(y))$ and similarly for
other derivatives of $F$ evaluated on the initial data.

Observe first that differentiating \eqref{ivp} with respect to $x$ at 
$x=0$ and solving for $\om_x$ shows that if $\om$ is a smooth solution,
then 
$$
\om_x(0,y)
=\frac{F_x^{(0)}(y)}{1-F_\om^{(0)}(y)}:=\om_1(y).      
$$
We can write 
$$
\om(x,y)=\om_0(y)+x\big(\om_1(y)+u(x,y)\big) 
$$
for a smooth function $u(x,y)$, and regard $u$ as the new unknown.  
\begin{proposition}\label{ivpprop}
In terms of $u$, \eqref{ivp} becomes 
\begin{equation}\label{ivpu}
x\pa_xu= (F_\om^{(0)}(y)-1)u+xG(x,y,u,\pa_yu),\qquad u(0,y)=0,
\end{equation}
where $G$ is a smooth function of $(x,y,u,q)$.  
\end{proposition}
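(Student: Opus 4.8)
The plan is to substitute $\om=\om_0+x(\om_1+u)$ directly into \eqref{ivp} and to expand $F$ by a first-order Taylor expansion about the base point $P_0:=(0,y,\om_0(y),\pa_y\om_0(y))$, arranging matters so that a single factor of $x$ can be divided out at the end. Writing $q$ for the last argument of $F$, we evaluate at $(x,y,\om,\pa_y\om)$ with
\[
\om=\om_0+x(\om_1+u),\qquad \pa_y\om=\pa_y\om_0+x\,\pa_y(\om_1+u),
\]
so the displacement from $P_0$ is $\delta=\bigl(x,\,0,\,x(\om_1+u),\,x\,\pa_y(\om_1+u)\bigr)$. The key structural observation is that every nonzero component of $\delta$ is $x$ times a function that is smooth in $(x,y,u,\pa_y u)$ (note $\om_1$ and $\pa_y\om_1$ depend on $y$ alone), and that the $y$-slot of $\delta$ vanishes so no $F_y$-term enters.

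First I would compute the left-hand side: from $\om=\om_0+x\om_1+xu$ one gets $x\pa_x\om=x\om_1+xu+x^2\pa_x u$. For the right-hand side, Taylor's formula with integral remainder gives
\[
F=F^{(0)}+F_x^{(0)}x+F_\om^{(0)}x(\om_1+u)+F_q^{(0)}x\,\pa_y(\om_1+u)+R,
\]
where $F^{(0)}=F(P_0)$ and $R$ is the second-order remainder. By the hypotheses \eqref{first}, $F^{(0)}=0$ and $F_q^{(0)}=0$, so both the value and the entire $q$-derivative term drop out. Equating the two sides and transposing $x\om_1+xu$ yields
\[
x^2\pa_x u=x\bigl(F_x^{(0)}+F_\om^{(0)}\om_1-\om_1\bigr)+x\bigl(F_\om^{(0)}-1\bigr)u+R.
\]
The coefficient of the bare linear-in-$x$ term vanishes identically because $\om_1$ was defined precisely as $\om_1=F_x^{(0)}/(1-F_\om^{(0)})$, i.e. $F_x^{(0)}=(1-F_\om^{(0)})\om_1$; this cancellation is the one place where the specific choice of $\om_1$ is used. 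Hence $x^2\pa_x u=x(F_\om^{(0)}-1)u+R$, and dividing this identity of smooth functions through by the common factor $x$ produces \eqref{ivpu}, with $G$ identified as $R/x^2$. Since each step is an identity transformation of an invertible change of unknown, the equivalence runs in both directions; the initial condition $u(0,y)=0$ follows because, for a solution, $\om-\om_0-x\om_1$ vanishes together with its $x$-derivative at $x=0$.

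The main obstacle is verifying that $R$ really has the form $x^2G(x,y,u,\pa_y u)$ with $G$ smooth, so that the concluding division by $x$ is legitimate rather than merely formal. Here I would invoke the integral form of the second-order remainder,
\[
R=\int_0^1(1-t)\,\delta^{\top}D^2F(P_0+t\delta)\,\delta\;dt,
\]
and factor $\delta=x\,v$ with $v=\bigl(1,0,\om_1+u,\pa_y(\om_1+u)\bigr)$ smooth in $(x,y,u,\pa_y u)$. Then $\delta^{\top}D^2F\,\delta=x^2\,v^{\top}D^2F\,v$, while the evaluation point $P_0+t\delta$ depends smoothly on $(t,x,y,u,\pa_y u)$; integrating in $t$ extracts exactly the factor $x^2$ and leaves $R=x^2G$ with $G$ smooth and depending on its arguments only through $(x,y,u,\pa_y u)$. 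This simultaneously confirms that the quotient $R/x^2$ is smooth and that $G$ is permitted to depend on $\pa_y u$ (entering through $v$), completing the reduction.
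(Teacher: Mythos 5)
Your proof is correct and follows essentially the same route as the paper: substitute $\om=\om_0+x(\om_1+u)$, Taylor expand $F$ about the initial data point, use \eqref{first} to kill the constant and $F_q$ terms, use the definition of $\om_1$ to cancel the remaining inhomogeneous term, and observe that the quadratic remainder carries a factor of $x^2$. The only (harmless) difference is that you make the integral form of the remainder explicit where the paper simply asserts that $Q$ is a homogeneous quadratic in the displacement with smooth coefficients.
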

\begin{proof}
It is clear from the discussion above that the initial condition on $u$ is 
$u(0,y)=0$.    
Set $q_0(y)=\pa_y\om_0(y)$.  The second order Taylor expansion of
$F(x,y,\om,q)$ about $(0,y,\om_0(y),q_0(y))$ takes the form 
$$
F(x,y,\om,q)=F_x^{(0)}(y)x+F_\om^{(0)}(y)(\om-\om_0(y))
+Q(x,\om-\om_0(y),q-q_0(y)),
$$
where $Q$ is a homogeneous quadratic polynomial of its arguments with
coefficients which are smooth functions of $(x,y,\om,q)$.  We have 
$$
\pa_x\om = \om_1 +(x\pa_x+1)u,\qquad
\pa_y\om-q_0(y)=x(\pa_y\om_1+\pa_yu).  
$$
Substituting and then dividing by $x$ shows that \eqref{ivp} becomes  
$$
\om_1 +(x\pa_x+1)u = F_x^{(0)}+F_\om^{(0)}(\om_1+u) + xG(x,y,u,\pa_yu)
$$
for a smooth function $G$.  The definition of $\om_1$ shows that
$\om_1=F_x^{(0)} + F_\om^{(0)}\om_1$, so this reduces to \eqref{ivpu}.  
\end{proof}

Proposition~\ref{ivpprop} implies that Theorem~\ref{pdetheorem} follows
from the following special case.  

\begin{proposition}\label{pdeprop}
Let $b(y)$ and $G(x,y,u,q)$ be smooth and suppose $b(y)<0$.  Then the IVP 
\begin{equation}\label{ivpreduced}
x\pa_xu = b(y)u +xG(x,y,u,\pa_yu),\qquad u(0,y)=0
\end{equation}
has a unique smooth solution for sufficiently small $x\geq 0$.  
\end{proposition}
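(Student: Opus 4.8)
The plan is to combine a formal power series construction, which both pins the solution down and isolates the role of the sign hypothesis, with a contraction-mapping argument for the infinite-order remainder. Writing a putative smooth solution as $u(x,y)=\sum_{k\ge 1}u_k(y)x^k$ (consistent with $u(0,y)=0$) and substituting into \eqref{ivpreduced}, the coefficient of $x^k$ yields $\big(k-b(y)\big)u_k(y)=R_k(y)$, where $R_k$ is a universal polynomial in $u_1,\dots,u_{k-1}$, their $y$-derivatives, and the Taylor data of $G$. Since $b(y)<0$ we have $k-b(y)\ge 1$ for every $k\ge 1$, so each $u_k$ is uniquely and smoothly determined by $u_k=R_k/(k-b)$. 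This simultaneously forces the full Taylor expansion of any smooth solution at $x=0$, which will yield uniqueness, and identifies $b(y)<0$ as exactly the hypothesis ruling out the resonances $k-b(y)=0$ that obstruct solvability (compare the example $x\pa_x\om=\om+x$ in the text).

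By Borel's lemma I would then choose a smooth $u^*(x,y)$ with $\pa_x^k u^*(0,y)/k!=u_k(y)$, so that $u^*$ solves \eqref{ivpreduced} up to an error $E(x,y)$ vanishing to infinite order at $x=0$. Seeking the exact solution as $u=u^*+v$ and linearizing $G$ about $u^*$, the remainder must satisfy an equation of the form $x\pa_x v=\big(b(y)+xa\big)v+xc\,\pa_y v+xN(x,y,v,\pa_y v)-E$, where $a,c$ are smooth, $N$ collects the terms at least quadratic in $(v,\pa_y v)$, and we demand $v=O(x^\infty)$.

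The heart of the argument is to solve this remainder equation by an integral formulation. Passing to $s=\log x$ turns $x\pa_x$ into $\pa_s$, and since $b(y)<0$ the operator $\pa_s-b(y)$ has a bounded Green's function supported in $\{\sigma<s\}$; in the original variable this is the Volterra operator $(\mathcal{K}h)(x,y)=\int_0^x (x/x')^{b(y)}\,h(x',y)\,dx'/x'$, which satisfies $(x\pa_x-b)\mathcal{K}=\mathrm{Id}$ and $\mathcal{K}(x^N)=x^N/(N-b(y))$. Thus $\mathcal{K}$ preserves every order of vanishing and gains the small factor $1/(N-b)$ at high order; moreover the coefficient of $\pa_y v$ carries an extra power of $x$ (equivalently $e^s\to0$ as $s\to-\infty$), so the genuinely PDE, nonlocal-in-$y$ term is a contraction-small perturbation near the boundary. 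Rewriting the remainder equation as $v=\mathcal{K}\big((xa)v+xc\,\pa_y v+xN-E\big)$ and iterating in a scale of weighted norms controlling $x^{-N}\pa_y^\beta v$, I would extract a unique fixed point with $v=O(x^\infty)$; differentiating $\mathcal{K}$ in $y$ produces only an integrable $\log(x/x')$ factor from $(x/x')^{b(y)}$, which worsens constants but not convergence, and a bootstrap on the number of derivatives upgrades the fixed point to a solution smooth up to $x=0$.

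I expect the main obstacle to be precisely this last step: controlling the nonlocal coupling through $\pa_y u$ while obtaining genuine smoothness, not merely finite regularity, up to the boundary. As the introduction explains, unlike the conformally compact case this term cannot be eliminated to render the problem noncharacteristic, so the contraction must be run in a family of norms strong enough to close on $\pa_y v$ yet weak enough that the $O(x)$ smallness of the transport term together with the $1/(N-b)$ gain of $\mathcal{K}$ dominate. (The paper instead resolves this coupling geometrically, by the method of characteristic integral curves described in the introduction, which tracks the $y$-dependence along a flow-out rather than perturbatively.)
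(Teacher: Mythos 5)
Your formal-series analysis is correct: the recursion $(k-b(y))u_k=R_k$ is nonresonant precisely because $b<0$, it determines the full Taylor expansion of any smooth solution, and Borel's lemma then reduces the problem to a remainder $v=O(x^\infty)$ solving $x\partial_xv=(b+xa)v+xc\,\partial_yv+xN-E$. This is a genuinely different route from the paper (a Fuchsian reduction in the spirit of the cited book of Kichenassamy, rather than the paper's characteristic flow-out in the jet bundle), and up to this point it is sound. The gap is exactly where you suspect it, and I do not think your proposed fix closes it. The operator $\mathcal{K}=(x\partial_x-b)^{-1}$ is indeed bounded and gains $1/(N-b)$ on $O(x^N)$ errors, but these gains improve the \emph{order of vanishing}, not the \emph{derivative count}: each application of $v\mapsto\mathcal{K}(xc\,\partial_yv+\cdots)$ consumes one more $y$-derivative than it returns, so no fixed Banach norm (however weighted in $x$) is mapped to itself, and the iteration only yields estimates of the form $\|x^{-N-m}v^{(m)}\|\lesssim \frac{(Cx)^m}{N(N+1)\cdots(N+m-1)}\max_{j\le m}\|\partial_y^j(\text{data})\|$. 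Since $G$, $b$, $E$ are merely smooth, the quantities $\|\partial_y^j E\|$ may grow faster than any factorial in $j$, so the telescoping series need not converge on any fixed neighborhood $\{x<\epsilon\}$; a scale-of-Banach-spaces (Ovsyannikov/Nishida) argument that absorbs one derivative per step requires analytic data. The same loss of derivatives undermines your uniqueness claim: pinning down the Taylor series does not exclude two solutions differing by an $O(x^\infty)$ function, and ruling that out again requires controlling $\partial_yw$ of the difference $w$.

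The natural repair is to stop treating $xc\,\partial_yv$ perturbatively and instead invert the full first-order operator $x\partial_x-xc\,\partial_y-b$ along its characteristics $x\,dy/dx=-xc$, with Gronwall along each characteristic beating the $x^{-|b|}$ growth of the resolvent by the $O(x^\infty)$ vanishing of $E$ (and of $w$, for uniqueness). That is in essence what the paper does, globally and without Borel summation: it lifts to the $1$-jet bundle, observes that the Hamiltonian vector field of $H=xp-bu-xG$ vanishes identically on the initial manifold, and invokes Theorem~\ref{vectfield} --- existence and uniqueness of \emph{characteristic integral curves} $t\gamma'=V(t,\gamma)$ through a zero of $V$ whose linearization has eigenvalues with real part less than $1$ --- to produce the flow-out $\cF$, then shows the contact form pulls back to zero on $\cF$ so that $\cF$ is the $1$-jet of the solution. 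Your operator $\mathcal{K}$ and its $s^{A}$-type bounds are precisely the engine of the paper's proof of Theorem~\ref{vectfield} (equations \eqref{inteq}--\eqref{deriv}), but there they act on a finite-dimensional ODE system, where no derivatives are lost.
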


We prove Proposition~\ref{pdeprop} by an adaptation of the method of
characteristics.  The main
tool is a result asserting the existence and uniqueness of smooth
``characteristic integral curves'' of time-dependent vector fields
vanishing at an initial point.    

Let $M$ be a smooth manifold and $p_0\in M$.  Suppose that $V(t,p)$ is a 
smooth time-dependent vector field defined for $t$ near $0$ and $p$ in a 
neighborhood of $p_0$, such that 
$V(0,p_0)=0$.  By a characteristic integral curve for $V$ at $p_0$ we mean
a curve 
$\ga:[0,\ep)\rightarrow M$ for some $\ep>0$ such that 
\begin{equation}\label{ivpode}
t\frac{d}{dt}\ga (t)=V(t,\ga(t)),\quad \ga(0)=p_0.
\end{equation}
Recall that the linearization of a vector
field at a zero is the endomorphism $DV$ of $T_{p_0}M$ such that 
$V(p)=DV(p-p_0)$ to first order at $p_0$.  For a time-dependent vector
field this refers to the linearization of the vector field in the space
variables with $t$ fixed.  
\begin{theorem}\label{vectfield}
Let $V(t,p)$ be a smooth time-dependent vector field such that
$V(0,p_0)=0$.  Suppose that all eigenvalues $\lambda$ of $DV(0,p_0)$
satisfy $\Re \lambda <1$.  Then on a sufficiently small time interval there
exists a unique smooth characteristic 
integral curve for $V$ at $p_0$.  This characteristic integral curve
depends smoothly on variations of the initial point $p_0$ for which the
conditions $V(0,p_0)=0$ and $\Re\lambda<1$ continue to hold.    
\end{theorem}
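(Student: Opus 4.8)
The plan is to linearize $V$ at its zero and to treat \eqref{ivpode} as a perturbation of the constant-coefficient singular system $t\frac{d}{dt}\ga=DV(0,p_0)\,\ga$. Choose local coordinates centered at $p_0$, so that $\ga$ is identified with a curve $x(t)\in\R^n$ with $x(0)=0$, and write $A=DV(0,p_0)$ and $R(t,x)=V(t,x)-Ax$, so that $R(0,0)=0$ and $D_xR(0,0)=0$. Then \eqref{ivpode} reads $t\dot x=Ax+R(t,x)$. The homogeneous system $t\dot x=Ax$ has fundamental matrix $t^A:=\exp\big((\log t)A\big)$, so by variation of parameters any solution with $x(0)=0$ must satisfy the integral equation
\[
x(t)=\mathcal{T}[x](t):=t^A\int_0^t\tau^{-A-1}R\big(\tau,x(\tau)\big)\,d\tau .
\]
The role of the eigenvalue hypothesis appears already here: since $R(\tau,x(\tau))=O(\tau)$ when $x=O(\tau)$, the integrand is of size $\tau^{-A}$, whose entries behave like $\tau^{-\lambda}$, and the integral converges at $\tau=0$ precisely because $\Re\lambda<1$ for every eigenvalue $\lambda$ of $A$. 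In fact $\int_0^t\tau^{-A}\,d\tau=(I-A)^{-1}t^{\,I-A}$, and the invertibility of $I-A$ (again from $\Re\lambda<1$) produces the expected leading behavior $x(t)\sim(I-A)^{-1}V_t(0,p_0)\,t$.

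First I would establish existence and uniqueness of a continuous solution. On the complete metric space $\{x\in C^0([0,\ep];\R^n):|x(t)|\le Kt\}$, for a suitable constant $K$ and small $\ep>0$, I would show that $\mathcal{T}$ maps the space to itself and is a contraction. The estimates are routine: because $D_xR(0,0)=0$, the $x$-Lipschitz constant of $R$ on $\{t\le\ep,\ |x|\le K\ep\}$ is small, and combined with the convergent kernel bound from the first paragraph this gives $\|\mathcal{T}x-\mathcal{T}x'\|\le\theta\|x-x'\|$ with $\theta<1$ after shrinking $\ep$. The unique fixed point is a solution of \eqref{ivpode}, and differentiating the integral equation shows it is $C^1$ up to $t=0$.

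Next I would promote this to a smooth solution. Substituting a formal series $x\sim\sum_{k\ge1}x_kt^k$ into $t\dot x=Ax+R(t,x)$ and matching powers of $t$ gives, at order $k$, the relation $(kI-A)x_k=P_k$, where $P_k$ is a polynomial in $x_1,\dots,x_{k-1}$ and the Taylor coefficients of $R$; the hypothesis $\Re\lambda<1$ forces $kI-A$ to be invertible for every integer $k\ge1$, so the formal series is uniquely determined. Writing $x^{(N)}(t)=\sum_{k=1}^Nx_kt^k$, the difference $y=x-x^{(N)}$ solves a singular equation of the same type whose forcing vanishes to order $N+1$, and the integral representation yields $y=O(t^{N+1})$; since $N$ is arbitrary, $x$ is asymptotic to its formal series to all orders. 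A bootstrap then gives genuine smoothness: differentiating $t\dot x=Ax+R$ (justified via difference quotients) shows $\dot x$ satisfies a singular equation with linearization $A-I$, whose eigenvalues have real part $<0$, so the same contraction argument, applied successively to the derivatives (whose eigenvalue conditions only improve), shows each $x^{(j)}$ extends continuously to $t=0$, i.e. $x\in C^\infty([0,\ep))$ with Taylor series $\sum x_kt^k$. Uniqueness in the full smooth class, not merely within the contraction ball, then follows because any two smooth solutions share this uniquely determined Taylor series, hence differ by a flat function solving a linearized homogeneous singular equation that the contraction estimate forces to vanish.

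Finally, for smooth dependence on $p_0$, I would note that the conditions $V(0,p_0)=0$ and $\Re\lambda<1$ are open, that the coefficients $x_k$ depend smoothly on $p_0$ through $(kI-A)^{-1}$ and the jets of $V$, and that the contraction in the previous two steps is uniform as $p_0$ varies; the parametrized contraction mapping principle then delivers smooth dependence of $x$, hence of $\ga$. The main obstacle is the smoothness step: upgrading the merely continuous fixed point to a function that is $C^\infty$ up to the singular endpoint $t=0$, while simultaneously obtaining uniqueness in the entire smooth category. Keeping all estimates uniform as one controls the matrix kernel $t^A\tau^{-A-1}$ when $A$ is nondiagonalizable or has complex eigenvalues (so that $t^A$ carries logarithmic and oscillatory factors) is the delicate part, and it is exactly there, together with the invertibility of each $kI-A$, that the sharp threshold $\Re\lambda<1$ is used.
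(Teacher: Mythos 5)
Your proposal is correct, and its core---recasting \eqref{ivpode} as a singular Volterra integral equation solved by a contraction on curves with $|x(t)|\lesssim t$, followed by a bootstrap for smoothness and a parametrized fixed-point argument for smooth dependence---is the same strategy the paper uses (following Kichenassamy's Fuchsian reduction). The execution differs in two places worth noting. First, the paper makes the preliminary substitution $\ga(t)=x_0+t\big(\ga_1+\sigma(t)\big)$ with $\ga_1=[I-DV(0,x_0)]^{-1}V_t(0,x_0)$, which converts the problem to $t\sigma'+A\sigma=tG(t,\sigma)$, $\sigma(0)=0$, where $A=I-DV(0,x_0)$ now has eigenvalues of \emph{positive} real part; the fixed-point operator becomes $\sigma\mapsto t\int_0^1 s^AG(st,\sigma(st))\,ds$, whose kernel $s^A$ is uniformly \emph{bounded} on $(0,1]$ rather than merely integrable. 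This sidesteps exactly the difficulty you flag at the end (controlling $t^A\tau^{-A-1}$ with logarithmic and oscillatory factors when $A$ is nondiagonalizable); your version works but needs those extra kernel estimates. Second, for smoothness the paper does not pass through the formal power series: it rewrites the fixed-point identity as $\sigma(t)=t^{-A}\int_0^t s^AG(s,\sigma(s))\,ds$, differentiates for $t>0$, and changes variables back to get $\sigma'(t)=G(t,\sigma(t))-A\int_0^1 s^AG(st,\sigma(st))\,ds$, which is manifestly continuous up to $t=0$; iterating gives $C^\infty$ directly, and uniqueness in the full smooth class is immediate because any smooth solution is forced to have $\ga'(0)=\ga_1$ and hence lies in the class where the contraction applies. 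Your formal-series-plus-flat-remainder route, with the invertibility of $kI-A$ for $k\ge 1$, is sound but longer, and your ``difference quotient'' justification for differentiating the ODE is handled more cleanly by differentiating the integral equation instead.
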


Observe that the case $M=\R$ of Theorem~\ref{vectfield} coincides precisely
with the special case 
$n=1$ of Theorem~\ref{pdetheorem}, upon relabeling $t$
as $x$, $\gamma$ as $\omega$, and $V$ as $F$.  In particular, the examples
above show that  
existence and uniqueness of smooth solutions can fail if $\lambda =1$, and
there may be continuous solutions which are not smooth if 
$0<\lambda<1$.     

The first step in the proof of Theorem~\ref{vectfield} is to perform a
Taylor expansion 
analogous to the one made above for the pde.  Work in local coordinates 
on $M$ and let $x_0$ denote the coordinates of $p_0$.  Differentiating  
\eqref{ivpode} with respect to $t$ at $t=0$ and solving for $\gamma'(0)$
gives
$$
\ga'(0)=[I-DV(0,x_0)]^{-1}V_t(0,x_0):=\ga_1.
$$
Write
\begin{equation}\label{sigma}
\ga(t)=x_0 +t\big(\ga_1 + \sigma(t)\big). 
\end{equation}
Upon Taylor expanding $V(t,x)$ about $(0,x_0)$, substituting \eqref{sigma},
and simplfying as in the proof of Proposition~\ref{ivpprop}, one finds  
that when written in terms of $\sigma$, \eqref{ivpode} takes the form 
\begin{equation}\label{ivpodes}
t\sigma' + A\sigma  = tG(t,\sigma), \quad \sigma(0)=0.
\end{equation}
Here $A=I-DV(0,x_0)$ has the property that all of its eigenvalues have
positive real part.  $G$ is smooth, and $A$ and $G$ depend smoothly on the 
initial point $x_0$.  Initial value problems of the form \eqref{ivpodes}
are 
studied in Chapter 5 of \cite{Ki}.  The results formulated there assume that 
$A$ is independent of the parameters, but the same arguments apply to our 
situation.  We briefly outline a
proof that \eqref{ivpodes} has a unique smooth solution varying smoothly
with the parameters $x_0$ if the eigenvalues of $A$ have positive real
part.  Theorem~\ref{vectfield} is then a consequence by the reduction
above. 

The problem \eqref{ivpodes} can be reformulated as the integral equation 
\begin{equation}\label{inteq}
\sigma(t)=(T\sigma)(t):=t\int_0^1s^AG(st,\sigma(st))ds.
\end{equation}
The hypothesis that the eigenvalues of $A$ have positive real part implies
that the operators $s^A$ are uniformly bounded for $0<s\leq 1$.  
A standard contraction mapping/fixed point argument proves the existence
and uniqueness of a continuous solution.  
To establish smoothness in $t$, rewrite \eqref{inteq} as
$$
\sigma(t)= t^{-A}\int_0^ts^AG(s,\sigma(s))ds.
$$  
This shows that $\sigma$ is $C^1$ for $t>0$.  Differentiate in 
$t$ and change variables back to obtain  
\begin{equation}\label{deriv}
\sigma'(t)= G(t,\sigma(t))-A\int_0^1s^AG(st,\sigma(st))ds.
\end{equation}
Thus $\sigma$ is $C^1$ up to $t=0$.  Now 
successively differentiating \eqref{deriv} shows that $\sigma$ is
$C^{\infty}$.  Smoothness of $\sigma$ with respect to the parameters is a
consequence of the implicit function theorem applied to the equation 
$\sigma -T\sigma=0$.  

\bigskip
\noindent
{\it Proof of Proposition~\ref{pdeprop}.}
We construct a singular version of a Hamiltonian
flow-out in the first jet bundle of the solution $u$.  The argument follows 
the usual reasoning for the non-characteristic case, substituting 
Theorem~\ref{vectfield} in an appropriate parameterization for the 
existence and uniqueness of integral curves 
of the Hamiltonian vector field.  

Let $\cJ$ denote the first jet bundle of a scalar function $u$ on $\R^n$,
with coordinates $(x,y,u,p,q)$ where $p$ is the variable dual to $x$, and 
projection $\pi:\cJ\rightarrow \R^{n}$ given by $\pi(x,y,u,p,q)=(x,y)$.  
Set $\bx=(x,y)$ and $\bp=(p,q)$.  The 1-jet of a function $u$ on $\R^n$ is
the 
section of $\cJ$ given by $\bx\rightarrow (\bx,u(\bx),du(\bx))$.  We
denote its image $\{(\bx,u(\bx),du(\bx))\}$ by $\cS_u$; this is a
submanifold of $\cJ$ of dimension $n$.  
The tautological contact form is $\theta=du-\bp_id\bx^i$.  $\cS_u$ is a
Legendrian submanfold relative to $\theta$; i.e. the pullback of $\theta$
to $\cS_u$ vanishes.  

Recall that if  
$H(\bx,u,\bp)$ is a smooth real function on $\cJ$, the associated
Hamiltonian vector field is 
$$
\xi_H=H_{\bp_i}\pa_{\bx^i} +
\bp_iH_{\bp_i}\pa_u-(H_{\bx^i}+\bp_iH_u)\pa_{\bp_i}.
$$
It is uniquely determined by the conditions
\begin{equation}\label{Hprop1}
\xi_H\into d\theta = dH\mod \theta\qquad\qquad \theta(\xi_H)=0
\end{equation}
and satisfies
\begin{equation}\label{Hprop2}
\xi_H\into d\theta = dH -H_u\theta,\qquad \xi_HH=0.
\end{equation}
If $u$ is a solution of $H(\bx,u,du)=0$, then $\xi_H$ is tangent to  
$\cS_u$ at all points of $\cS_u$.  

Take $H$ to be the Hamiltonian corresponding to \eqref{ivpreduced},
i.e. 
$$
H(x,y,u,p,q)=xp-b(y)u-xG(x,y,u,q).
$$
Differentiating \eqref{ivpreduced} at $x=0$ shows that a solution $u$ must
satisfy 
\begin{equation}\label{initderiv}
\pa_xu(0,y)=\frac{G(0,y,0,0)}{1-b(y)}:= p_0(y).
\end{equation}
Define a smooth submanifold $\cI\subset\cJ$ of dimension $n-1$ by 
$$
\cI=\{(0,y,0,p_0(y),0)\}.
$$
Hamilton's equations for the integral curves of $\xi_H$ take
the form
\[
\begin{split}
\frac{dx}{ds}&=x\\
\frac{dy}{ds}&=-xG_q\\
\frac{du}{ds}&=x(p-q_iG_{q_i})\\
\frac{dp}{ds}&=p(b(y)-1)+G+x(G_x+pG_u)\\
\frac{dq}{ds}&=b_y(y)u+b(y)q+x(G_y+qG_u).
\end{split}
\]
Observe that $\xi_H$ vanishes identically on $\cI$.  So all integral curves
of $\xi_H$ beginning on $\cI$ are constant; there is no Hamiltonian
flow-out in the usual sense.  Instead we consider characteristic integral
curves of $\xi_H$ beginning on $\cI$.  $\xi_H$ is time-independent and
constants are also characteristic 
integral curves.  But the characteristic integral curves  
are not unique:  $D\xi_H$ on $\cI$ has $\lambda =1$ as an eigenvalue
arising from the first equation in the system above.  By using $x$
as the parameter, we will obtain unique nonconstant  
characteristic integral curves of $\xi_H$ emanating from $\cI$ whose union 
will form the submanifold $\cS_u$ giving the solution $u$. 

Use $x$ as a parameter for the characteristic integral curves.  The
first equation above gives $d/ds=xd/dx$.  Substituting in the remaining
equations gives the system
\begin{equation}\label{xtime}
\begin{split}
x\frac{dy}{dx}&=-xG_q\\
x\frac{du}{dx}&=x(p-q_iG_{q_i})\\
x\frac{dp}{dx}&=p(b(y)-1)+G+x(G_x+pG_u)\\
x\frac{dq}{dx}&=b_y(y)u+b(y)q+x(G_y+qG_u).  
\end{split}
\end{equation}
This has the form \eqref{ivpode}, where $x$ plays the role of $t$.  
Choose $y_0\in \R^{n-1}$ and impose initial conditions 
\begin{equation}\label{init}
y(0)=y_0,\quad u(0)=0,\quad p(0)=p_0(y_0),\quad q(0)=0.
\end{equation}
The linearization of the right-hand side of \eqref{xtime} evaluated at
$x=0$ and at the given initial conditions for $(y,u,p,q)$ is 
$$D=
\begin{pmatrix}
0&0&0&0\\
0&0&0&0\\
*&*&b(y_0)-1&*\\
*&*&0&b(y_0)I
\end{pmatrix},
$$
where the blocks have sizes $n-1,1,1,n-1$.  Here $*$ denotes a  
quantity whose value will be irrelevant and $I$ denotes the
$(n-1)\times (n-1)$ identity matrix.  The eigenvalues of $D$ are  
$0$ with multiplicity $n$, $b(y_0)-1$ with multiplicity $1$, and $b(y_0)$ 
with multiplicity $n-1$.  These are all real and less than $1$, so 
Theorem~\ref{vectfield} implies that there is a unique smooth solution  
$(y(x,y_0),u(x,y_0),p(x,y_0),q(x,y_0))$ of \eqref{xtime}, \eqref{init}
for sufficiently small $x$ varying smoothly with $y_0$. 

Define a map $\Phi$ into $\cJ$ by
$$
\Phi(x,y_0)=(x,y(x,y_0),u(x,y_0),p(x,y_0),q(x,y_0)).
$$
Since $y(0,y_0)=y_0$, it follows that $\Phi$ is
a diffeomorphism from a neighborhood of $\{x=0\}$ to a submanifold
$\cF\subset \cJ$ of dimension $n$.  We claim that $H=0$ on $\cF$.  
Since $\xi_Hx=x$ and $y_0$ is constant on the 
solution curves, it follows that $\Phi^*\xi_H=x\pa_x$.  Since $\xi_HH=0$,
we have
$x\pa_x(\Phi^*H)=0$.  Since $H=0$ on $\cI$, one concludes that $H=0$ on 
$\cF$ as claimed.  

We now prove existence in Proposition~\ref{pdeprop}.  
The projection $\pi:\cJ\rightarrow \R^{n}$ 
restricts to a diffeomorphism from $\cF$ to a neighborhood of $\{x=0\}$
(possibly after shrinking $\cF$).  Therefore on $\cF$ we can regard $u,p,q$
as functions of $(x,y)$.  In particular this defines a smooth function
$u(x,y)$.  We claim that  
\begin{equation}\label{newpq}
p(x,y)=\pa_xu(x,y),\qquad q_i(x,y)=\pa_{y^i}u(x,y).
\end{equation}
This is equivalent to saying that $\cF=\cS_u$.    
Existence in Proposition~\ref{pdeprop} follows immediately, 
as then the equation $H=0$ on $\cF$
together with the initial condition become the statement that $u$ satisfies   
\eqref{ivpreduced}.  

Since $\theta=du-pdx-q_idy^i$, in order to prove \eqref{newpq} it suffices 
to show that the pullback of $\theta$ to $\cF$ vanishes.  
Recalling \eqref{Hprop1}, \eqref{Hprop2}, observe that 
$$
\cL_{\xi_H}\theta = \xi_H\into d\theta + d(\theta(\xi_H))
= dH - H_u\theta,
$$
where $\cL$ denotes the Lie derivative.  
For the pullback to $\cF$ we therefore obtain
$\cL_{\xi_H}\theta =-H_u\theta$.  Pulling back by $\Phi$ gives
$\cL_{x\pa_x}\Phi^*\theta = -\left(\Phi^*H_u \right)\Phi^*\theta$.  If we
write  
$\Phi^*\theta=\theta_0 dx + \theta_id(y_0)^i$, then this becomes
\begin{equation}\label{thetai}
x\pa_x\theta_0 = \left(-\Phi^*H_u-1\right) \theta_0,\qquad
x\pa_x\theta_i = \left(-\Phi^*H_u\right) \theta_i.
\end{equation}
For each $y_0$, these are scalar ode's of the form 
$x\pa_xv=\beta(x)v$, where $v=\theta_0$ or $\theta_i$ and 
$\beta=-\Phi^*H_u-1$ or $-\Phi^*H_u$.  Since $H_u=-b(y)$ at $x=0$ and
$b<0$, we have 
$\beta<0$ near $x=0$.  The general solution is 
$$
v=c\exp{\int^x\frac{\beta(s)\, ds}{s}}=cx^{\beta(0)}h(x),
$$
where $c\in \R$ and $h$ is a nonvanishing smooth function.  Since 
$\beta(0)<0$ and $\theta$ is smooth, we must have $c=0$, so we obtain
$\theta_0=\theta_i=0$ as 
desired.  (Alternately, the vanishing of $\theta_0$ and $\theta_i$ 
follows from uniqueness in Theorem~\ref{vectfield} applied to  
\eqref{thetai}.)

Finally we prove uniqueness in Proposition~\ref{pdeprop}.  
We show that if $u$ is any smooth solution of \eqref{ivpreduced}, then
$\cS_u=\cF$ near $\cI$.  We have already observed that  
$\pa_xu(0,y)$ is given by \eqref{initderiv}, so that 
$\cS_u\cap \{x=0\}=\cI=\cF\cap \{x=0\}$.  The system 
$$
x\frac{dy}{dx}=-xG_q(x,y,u(x,y),\pa_yu(x,y)),\qquad y(0)=y_0
$$
for unknown $y(x)$ has a unique smooth solution (either by
Theorem~\ref{vectfield} or by cancelling $x$ and quoting usual ode
theory).  As $y_0$ varies, the corresponding curves $(x,y(x))$ fill out a
neighborhood of $\{x=0\}$ in $\R^n\cap \{x\geq 0\}$.  Therefore near $\cI$,
$\cS_u$ is the union over $y_0$ of the lifts    
\begin{equation}\label{lifts}
x\mapsto (x,y(x),u(x,y(x)),\pa_xu(x,y(x)),\pa_yu(x,y(x))).
\end{equation}
The curves 
$$
x\mapsto (y(x),u(x,y(x)),\pa_xu(x,y(x)),\pa_yu(x,y(x)))
$$
solve \eqref{xtime} since $\xi_H$ is everywhere tangent to $\cS_u$.   
Since $\cF$ was defined to be the union of  
all curves \eqref{lifts} corresponding to solutions of \eqref{xtime},
it follows that $\cS_u=\cF$ near $\cI$.      
\stopthm


\begin{thebibliography}{BEGM}


\bibitem[G]{G} C. R. Graham, {\it Volume and area renormalizations for
conformally compact Einstein metrics}, Rend. Circ. Mat. Palermo,
Ser. II, Suppl. {\bf 63} (2000), 31--42, {\tt arXiv:math/9909042}.

\bibitem[GL]{GL} C. R. Graham and J. M. Lee, {\it Einstein metrics with
prescribed conformal infinity on the ball},  
Adv. Math.  {\bf 87} (1991), 186--225.

\bibitem[GS]{GS} C. Guillarmou and A. S\'a Barreto, {\it Scattering and
  inverse scattering on ACH manifolds}, J. Reine Angew. Math. {\bf 622}
  (2008), 1--55, {\tt arXiv:math/0605538}.

\bibitem[J]{J} M. S. Joshi, {\it A model form for exact $b$-metrics},
  Proc. Amer. Math. Soc. {\bf 129} (2001), 581--584.  

\bibitem[JS1]{JS1} M. S. Joshi and A. S\'a Barreto, {\it Recovering
  asymptotics of metrics from fixed energy scattering data}, Invent. Math. 
{\bf 137} (1999), 127--143, {\tt arXiv:math/9710221}.

\bibitem[JS2]{JS2} M. S. Joshi and A. S\'a Barreto, {\it Inverse scattering
  on asymptotically hyperbolic manifolds}, Acta Math. {\bf 184} (2000), 
  41--86, {\tt arXiv:math/9811118}.

\bibitem[Ka]{Ka} J. M. Kantor, {\it Eleven Dimensional Supergravity on Edge
  Manifolds}, University of Washington Ph.D. thesis, 2009.  

\bibitem[Ki]{Ki} S. Kichenassamy, {\it Fuchsian Reduction}, 
Progress in Nonlinear Differential Equations and Their Applications 71, 
Birkh\"auser, 2007.

\bibitem[Ma]{Ma} R. Mazzeo, {\it Elliptic theory of differential edge
  operators, I}, Comm. P.D.E. {\bf 16} (1991), 1615--1664.  

\bibitem[Me]{Me} R. B. Melrose, {\it The Atiyah-Patodi-Singer Index
Theorem}, Research Notes in Mathematics 4, A. K. Peters, 1993.



\end{thebibliography}
\end{document}